\title{Sinkhorn algorithms and linear programming solvers for optimal partial transport problems}
\author{Yikun Bai}
\date{June 2024}
\newtheorem{remark}{Remark}
\newtheorem{lemma}{Lemma}
\newtheorem{example}{Example}
\newtheorem{theorem}{Theorem}
\newcommand\mycommfont[1]{\footnotesize\ttfamily\textcolor{blue}{#1}}
\begin{document}

\maketitle

\begin{abstract}
In this note, we generalize the classical optimal partial transport (OPT) problem by modifying the mass destruction/creation term to function-based terms, introducing what we term ``generalized optimal partial transport'' problems. We then discuss the dual formulation of these problems and the associated Sinkhorn solver. Finally, we explore how these new OPT problems relate to classical optimal transport (OT) problems and introduce a linear programming solver tailored for these generalized scenarios.
\end{abstract}

\section{Notation and assumptions}
\begin{itemize}
\item $\mathbb{R}=(-\infty,\infty), \mathbb{R}_+=[0,\infty), \mathbb{R}_{++}=(0,\infty)$.

\item $\Omega$: A non-empty convex open subset of $\mathbb{R}^d$ for some $d\in \mathbb{N}$.

\item $c: \Omega^2\to \mathbb{R}_+$: A lower semi-continuous function, referred to as the ``\textit{cost function}'' in this note. 

Given a topological set $E$, $f: E\to \mathbb{R}$ is said to be lower-semi continuous at $x\in E$ if the following holds: 

Pick $x_n\to x$ (where the convergence is determined by the topology of $E$), then 
$$\liminf_{n}f(x_n)\ge f(x). $$

\item $\mathcal{P}(\Omega)$: The set of all probability measures defined on $\Omega$. 

\item $\mathcal{M}_+(\Omega)$: The set of all positive Radon measures defined on $\Omega$.

\item $\mathcal{M}(\Omega)$: The set of all Radon measures defined on $\Omega$. 
\item $C_0(\Omega):$ set of all $C_0$ functions. In particular, it is defined as  
$$C_0(\Omega):=\{f: \Omega\to \mathbb{R}, f\text{ is continuous},\exists \text{compact set} K\subset\Omega, s.t. f(x)=0,\forall x\notin K\}.$$

\item $\gamma$: A element in $\mathcal{P}(\Omega^2)$ or $\mathcal{M}_+(\Omega^2)$. 

\item $\mu, \nu$: elements in $\mathcal{P}(\Omega)$ or $\mathcal{M}_+(\Omega)$. 

\item $\pi_1, \pi_2: \Omega^2\to \Omega$: Canonical projections defined on $\Omega^2$. Specifically, 
$\pi_1((x,y)) = x$ and $\pi_2((x,y)) = y$. 

\item $f_\#\mu$: The push-forward of measure $\mu$ by a function $f: \Omega\to \Omega$. Specifically, 
$f_\#\mu(A) = \mu(f^{-1}(A))$ for any Borel set $A$.

\item $\gamma_1, \gamma_2$: $\gamma_1 = (\pi_1)_\#\mu$ is the first marginal of $\gamma$. $\gamma_2$ is defined similarly.

\item $\mu = \nu$: Indicates that two measures are equal. For each Borel set $A$, $\mu(A) = \nu(A)$, or equivalently, for each test function $\phi\in C_0$, we have 
$$\int\phi d\mu =\int \phi d\nu. $$

\item $\mu \leq \nu$: Indicates that $\mu$ is dominated by $\nu$. Specifically, for each Borel set $A$, $\mu(A) \leq \nu(A)$. Equivaletly, for each test function $\phi \in C_0(\Omega)$, $\phi\ge0$, we have 
$$\int \phi d\mu\leq \int \phi d\nu.$$

\item $\Gamma(\mu,\nu):=\{\gamma \in \mathcal{P}(\Omega^2) : \gamma_1 = \mu, \gamma_2 = \nu\}$. 

\item $\Gamma_\leq(\mu,\nu) := \{\gamma \in \mathcal{M}_+(\Omega^2) : \gamma_1 \leq \mu, \gamma_2 \leq \nu\}$.

\item $\Gamma_\leq^\eta(\mu,\nu) := \{\gamma \in \mathcal{M}_+(\Omega^2) : \gamma_1 \leq \mu, \gamma_2 \leq \nu, |\gamma| = \eta\}$, where $\eta \in [0, \min\{|\mu|, |\nu|\}]$. 

\item $\text{Supp}(\mu), \text{Supp}(\nu)$: The support sets of $\mu$ and $\nu$. 

\item $\mathcal{L}$: The Lebesgue measure defined on $\Omega$. 

\item Continuous setting: In this case, we suppose $\mu = f d\mathcal{L}$ and $\nu = g d\mathcal{L}$, where $f, g \in L_1(\Omega)$. 

\item Discrete setting: In this case, we suppose $\mu = \sum_{i=1}^n p_i\delta_{x_i}$ and $\nu = \sum_{j=1}^m q_j\delta_{y_j}$, where $n, m \in \mathbb{N}$ and $p_i, q_j > 0$ for all $i, j$. We denote $X = \{x_i : i \in [1:n]\}$ and $Y = \{y_j : j \in [1:m]\}$.

\item $dx, dy$: $dx$ is a reference measure in $\Omega$ such that $\mu \ll dx$, and similarly, $dy$ is a reference measure in $\Omega$ such that $\nu \ll dy$. Both are set to be probability measures.

\textit{Typically, we assume $d\mu/dx, d\nu/dy$ are $L_1$ functions.} 

\item $dxdy = dx \otimes dy$: The product measure of $dx$ and $dy$, defined as the reference measure on $\Omega^2$.

\item $\frac{d\mu}{dx}, \frac{d\nu}{dy}$: Density functions of $\mu$ and $\nu$ with respect to $dx$ and $dy$, respectively.

\item Inner product and integration: In this note, we use the following identity: 
$$\int f d\mu = \mu(f) = \langle f, \mu \rangle,$$
where $f$ is a measurable function defined on $\Omega$.

\item $\|\cdot\|_{TV}$: Total variation norm defined by:
$$\|\mu\|_{TV} = \int d|\mu|,$$
where $|\mu| = \mu_+ + \mu_-$, and $\mu_+, \mu_-$ are the unique measure decomposition of the signed measure $\mu \in \mathcal{M}(\Omega)$.

\item $\mu = \frac{d\mu}{d\nu} \nu + \mu^\perp$: The Lebesgue decomposition of $\mu$. By classical measure theory, this decomposition is unique.

\item $\mu \ll \nu$: Indicates that $\mu$ is absolutely continuous with respect to $\nu$.

\item $\|\mu\|, \|\nu\|$: Total masses of $\mu$ and $\nu$, respectively. $\|p\|, \|q\|$ are defined similarly.

\item $KL(\mu\parallel \nu)$: The Kullback-Leibler (KL) divergence between $\mu$ and $\nu$. See \eqref{eq:kl}. 

\item $\|\mu-\nu\|_{TV} = \|\mu - \nu\|_{TV}, P\|\mu-\nu\|_{TV} = \|\mu - \nu\|_{PTV}$: Total variation and partial total variation between $\mu$ and $\nu$. See \eqref{eq:tv} and \eqref{eq:ptv}. 

\item $f, D_f$: $f$ is an entropy function, and $D_f$ is the corresponding $f$-divergence.

\item $f'_\infty = \lim_{x \to \infty} \frac{f(x)}{x} \in \mathbb{R} \cup \{\infty\}$: The growth rate of $f$ at infinity.

\item $\iota_{A}$, where $A\in\Omega$: indicator function such that $\iota_A(x)=0$ if $x\in A$ and $\iota_A(x)=\infty$ elsewhere. 

\item $(E, E^*)$: $E$ is a convex set, and $E^*$ is its dual space. The dual pairing is defined by:
$$(x, x^*) \mapsto x^*(x) = \langle x, x^* \rangle \in \mathbb{R}.$$

\item $\lambda > 0$: A constant used in the OPT problem \eqref{eq:opt_1}. 

\item $\lambda_1, \lambda_2: \Omega \to \mathbb{R}_+$: Positive bounded functions. 

\item $\eta \in [0, \min\{|\mu|, |\nu|\}]$: A constant used in the MOPT problem \eqref{eq:mopt}. 

\item $\epsilon > 0$: A constant used in the entropic OPT problem \eqref{eq:egopt}.

\item $K = e^{-c/\epsilon}$

\item $\mu|_A$, where $A \subset \Omega$: The restriction of the measure $\mu$ on set $A$. Specifically, 
$$\mu|_A(B) = \mu(B \cap A)$$ for all Borel sets $B \subset \Omega$.

\item $\phi|_A$, where $A \subset \Omega$: The restriction of the function $\phi$ on $A$. Specifically, 
$$\phi|_A(x) = \begin{cases}
    \phi(x) & \text{if } x \in A \\
    0       & \text{elsewhere}
\end{cases}.$$
Note, the above definition can be equivalently redefined as 
$$\Phi\mid_A:A\to \mathbb{R}$$
with $\Phi\mid_A(x)=\Phi(x),\forall x\in A$. In this note, for conveneince, we to not distinguish these two formualations. 

\item $x\odot y,\frac{x}{y}$ where $x,y$ are vectors or matrices with same shape:  Point-wise multiplication/division.  

\item $1_n$: a $n\times 1$ vector where entry is $1$. Similarly, we can define $1_m, 1_{n\times m}, 0_n,0_m\ldots$. 

\end{itemize}

\section{Outlines}\label{sec:outline}
In section \ref{sec:background}, we introduce the foundational concepts of $f$-divergence, unbalanced optimal transport, and various optimal partial transport problems. Section \ref{sec:opt} presents the formulations of new optimal partial transport problems, their entropic versions, and the corresponding dual forms. Section \ref{sec:sinkhorn} discusses the Sinkhorn algorithms tailored for these new OPT problems. In section \ref{sec:lp}, we explore the relationship between the generalized OPT problems and classical OT problems, including discussions on the linear programming solver adapted for generalized OPT problems. Section \ref{sec:mopt} is dedicated to examining both the Sinkhorn and linear programming solvers for mass-constrained OPT problems.

\section{Background}\label{sec:background}
\subsection{Entropy Function and $f$-Divergence}
An \textbf{entropy function} $f: \mathbb{R} \to \mathbb{R} \cup \{\infty\}$ is lower semi-continuous, convex, and its domain $D(f) := \{x : f(x) < \infty\}$ is contained within $[0, \infty)$ and intersects $(0, \infty)$. The growth rate of $f$ at infinity is defined as:
$$f'_\infty = \lim_{x \to \infty} \frac{f(x)}{x} \in \mathbb{R} \cup \{\infty\}.$$

Given measures $\mu, \nu \in \mathcal{M}(\Omega)$ with the Lebesgue decomposition $\mu = \frac{d\mu}{d\nu}\nu + \mu^\perp$, the \textbf{$f$-divergence} is defined by:
\begin{align}
D_f(\mu\parallel \nu) := \int_{\Omega} f\left(\frac{d\mu}{d\nu}\right) d\nu + f'_\infty |\mu^\perp|, \label{eq:D_f}
\end{align}
assuming the convention $0 \cdot \infty = 0$.
\begin{example}\label{ex:entropy_func}
We consider the following examples of entropy functions:
\begin{itemize}
    \item \textbf{Kullback-Leibler (KL) Divergence:}
    \begin{align}
      f(s) = f_{KL}(s) := \begin{cases}
    s \ln(s) - s + 1 & \text{if } s > 0 \\
    1                & \text{if } s = 0 \\ 
    \infty           & \text{if } s < 0 
    \end{cases}. \label{eq:f_KL}
    \end{align}
    Here, $(f_{KL})'_\infty = \infty$, and the KL divergence is:
    \begin{align}
     KL(\mu\parallel \nu) := \begin{cases}
         \int \ln\left(\frac{d\mu}{d\nu}\right) \frac{d\mu}{d\nu} d\nu - \|\mu\| + \|\nu\| & \text{if } \mu \ll \nu \\
         \infty & \text{otherwise}
     \end{cases}, \label{eq:kl}
    \end{align}
    with $\ln(0) \cdot 0 = 0$ assumed.
    \item \textbf{Total Variation:}
    \begin{align}
    f(s) = f_{TV}(s) := \begin{cases}
    |s - 1| & \text{if } s \geq 0 \\
    \infty & \text{if } s < 0 
    \end{cases}, \label{eq:f_tv}
    \end{align}
    where $(f_{TV})'_\infty = 1$, and the total variation divergence is:
    \begin{align}
 \|\mu - \nu\|_{TV} := \int_{\Omega} d|\mu - \nu|, \label{eq:tv}
    \end{align}
    \item \textbf{Partial Total Variation:}
    \begin{align}
    f(s) = f_{PTV}(s) := \begin{cases}
    1 - s & \text{if } s \in [0, 1] \\
    \infty & \text{if } s < 0 \text{ or } s > 1
    \end{cases}, \label{eq:f_ptv}
    \end{align}
    with $(f_{PTV})'_\infty = \infty$. The corresponding divergence is termed "partial total variation divergence":
    \begin{align}
    \|\mu-\nu\|_{PTV} := \begin{cases}
    \|\nu - \mu\|_{TV} = \|\nu\| - \|\mu\| & \text{if } \mu \leq \nu \\
    \infty & \text{otherwise}
    \end{cases}, \label{eq:ptv}
    \end{align}
    \item \textbf{Equality Constraint Divergence:}
    \begin{align}
    f(s) = \iota_{\{1\}}(s) := \begin{cases}
    0      & \text{if } s = 1 \\
    \infty & \text{otherwise}
    \end{cases}, \label{eq:f_iota}
    \end{align}
    where $(\iota_{\{1\}})'_\infty = \infty$. The associated divergence is termed \textbf{equality-constraint divergence}:
    \begin{align}
    \iota_{=}(\mu\parallel \nu) := \begin{cases}
    0 & \text{if } \mu = \nu \\
    \infty & \text{otherwise}
    \end{cases}, \label{eq:D_=}
    \end{align}
    \item \textbf{Non-constraint Divergence Function:}
    \begin{align}
    f(s) = \mathbf{0}(s) := 0,
    \end{align}
    with $(\mathbf{0})'_\infty = 0$, the associate divergence, termed ``$D_\bold{0}$'', is:
    \begin{align}
    D_\bold{0}(\mu \parallel\nu) = 0, \label{eq:D_+}
    \end{align}
    Note, $\mathbf{0}$ does not meet classical entropy function conditions as it does not ensure $D(\mathbf{0}) \subset [0, \infty)$.
    \item \textbf{Partial-Transportation Constraint Divergence Function:}
    \begin{align}
    f(s) = \iota_{[0,1]}(s) = \begin{cases}
    0 & \text{if } s \in [0, 1] \\
    \infty & \text{if } s < 0 \text{ or } s > 1
    \end{cases}, \label{eq:f_<=}
    \end{align}
    where $(\iota_{[0,1]})_\infty = \infty$. The corresponding divergence, termed ``$\iota_\leq$'', is:
    \begin{align}
    \iota_\leq(\mu\parallel\nu) = \begin{cases}
    0 &      \text{if } \mu \leq \nu \\
    \infty & \text{otherwise}
    \end{cases}.
    \end{align}
\end{itemize}
\end{example}

\begin{remark}
It is straightforward to verify, $f_{PTV}=f_{TV}+\iota_{[0,1]}$, and thus,
$$PTV(\mu\parallel \nu )=\|\mu-\nu\|_{TV}+\iota_\leq(\mu\parallel \nu).$$
\end{remark}

\subsection{Optimal Transport and Optimal Partial Transport}
Optimal transport (OT) problems, as studied in \cite{villani2021topics,Villani2009Optimal}, seek the most cost-efficient way to transfer mass between two probability measures. Specifically, given $\mu, \nu \in \mathcal{P}(\Omega)$, the OT problem is defined as:
\begin{align}
OT(\mu, \nu) := \min_{\gamma \in \Gamma(\mu, \nu)} \int_{\Omega^2} c(x, y)  d\gamma(x, y) = \min_{\gamma \in \Gamma(\mu, \nu)} \langle c, \gamma \rangle \label{eq:ot}
\end{align}

Classical OT theory ensures the existence of a minimizer, as $c$ is lower semi-continuous, allowing the substitution of $\inf$ with $\min$.

In a discrete setting, where $\mu = \sum_{i=1}^n p_i \delta_{x_i}$ and $\nu = \sum_{j=1}^m q_j \delta_{y_j}$, which are discrete probability measures, the problem becomes:
\begin{align}
OT(\mu, \nu) := \min_{\gamma \in \Gamma(p, q)} \langle c, \gamma \rangle \label{eq:ot_discrete}
\end{align}
Here, $c \in \mathbb{R}^{n \times m}$ with $c_{i,j} = c(x_i, y_j)$ for all $i, j$, and $\Gamma(p, q) = \{\gamma\in \mathbb{R}_+^{n\times m}: \gamma 1_m=p, \gamma^\top 1_n=q\}$.

The problem \eqref{eq:ot}, \eqref{eq:ot_discrete} is referred to as the \textbf{balanced optimal transport problem} because it requires $\|\mu\| = \|\nu\|$. In an unbalanced setting, where $\mu, \nu \in \mathcal{M}_+(\Omega)$ may not be equal, the problem \eqref{eq:ot} can be generalized as follows:
\begin{align}
UOT(\mu, \nu; D_{f_1}, D_{f_2}) := \inf_{\gamma \in \mathcal{M}_+(\Omega^2)} \langle c, \gamma \rangle + D_{f_1}(\gamma_1\parallel \mu) + D_{f_2}(\gamma_2\parallel \nu) \label{eq:uot},
\end{align}
where $D_{f_1}, D_{f_2}$ are $f$-divergences defined in \eqref{eq:D_f}.

\begin{remark}
When $D_{f_1}, D_{f_2} = \iota_{=}$, the above formulation reverts to the balanced OT \eqref{eq:ot} when $\|\mu\| = \|\nu\|$, and to $\infty$ otherwise. Thus, \eqref{eq:ot} can be redefined as:
\begin{align}
OT(\mu, \nu) := \inf_{\gamma \in \mathcal{M}_+(\Omega^2)} \langle c, \gamma \rangle + \iota_{=}(\gamma_1\parallel \mu) + \iota_{=}(\gamma_2\parallel \nu) = \begin{cases}
    \min_{\gamma \in \Gamma(\mu, \nu)} \langle c, \gamma \rangle & \text{if } \|\mu\| = \|\nu\| \\
    \infty & \text{otherwise}
\end{cases} \label{eq:ot_2}.
\end{align}
\end{remark}

A specific class of unbalanced optimal transport, termed \textbf{optimal partial transport} and discussed in \cite{caffarelli2010free, Piccoli2014Generalized, figalli2010optimal, figalli2010new}, is formulated by setting $D_{f_1}, D_{f_2} = \lambda TV$:
\begin{align}
OPT(\mu, \nu; \lambda) &:= \inf_{\gamma \in \mathcal{M}_+(\Omega^2)} \langle c, \gamma \rangle + \lambda \left( TV(\gamma_1\parallel \mu)+TV(\gamma_2\parallel \nu)  \right) \label{eq:opt_1} \\
&= \inf_{\gamma \in \gamma_\leq(\mu, \nu)} \langle c, \gamma \rangle + \lambda (\|\mu - \gamma_1\| + \|\nu - \gamma_2\|) \label{eq:opt_2} \\
&= \inf_{\gamma \in \mathcal{M}_+(\Omega^2)} \langle c, \gamma \rangle + \lambda (PTV(\gamma_1\parallel \mu) + PTV(\gamma_2\parallel\nu)) \label{eq:opt_3}
\end{align}
where 
$$\Gamma_\leq(\mu, \nu) = \{\gamma \in \mathcal{M}_+(\Omega^2) : \gamma_1 \leq \mu, \gamma_2 \leq \nu\};$$ 
\eqref{eq:opt_2} has been validated by \cite{Piccoli2014Generalized}, and \eqref{eq:opt_3} directly follows from the definition of PTV (see \eqref{eq:ptv}).

Additionally, an equivalent formulation of \eqref{eq:opt_2}, termed ``\textbf{mass-constraint optimal partial transport}'', is proposed in this paper:
\begin{align}
MOPT_\eta(\mu, \nu) := \inf_{\gamma \in \Gamma_{\leq}^\eta(\mu,\nu)} \langle c, \gamma \rangle \label{eq:mopt} 
\end{align}
where $\eta \in [0, \min\{|\mu|, |\nu|\}]$, 
$$ \inf_{\gamma \in \Gamma_{\leq}^\eta}(\mu,\nu)=\{\gamma\in \mathcal{M}_+(\Omega^2):\gamma_1\leq \mu,\gamma_2\leq \nu,\|\gamma\|=\eta\}.$$

Optimal partial transport problems \eqref{eq:opt_1}, \eqref{eq:mopt} admit a minimizer.

\section{A Generalized Version of the OPT Problem}\label{sec:opt}
In this section, we assume $\lambda_1, \lambda_2: \Omega \to \mathbb{R}_+$ are measurable, bounded functions. We introduce the generalized OPT problem defined as:
\begin{align}
GOPT(\mu, \nu; \lambda_1, \lambda_2) := \inf_{\gamma \in \mathcal{M}_+(\Omega^2)} \langle c, \gamma \rangle + 
\begin{cases}
TV(\lambda_1\gamma_1\parallel \lambda_1\mu)\\
\text{or}\\
\overline{PTV}(\lambda_1\gamma_1 \parallel \lambda_1\mu)
\end{cases} + 
\begin{cases}
TV(\lambda_2\gamma_2 \parallel \lambda_2\nu)\\
\text{or}\\
\overline{PTV}(\lambda_2\gamma_2 \parallel \lambda_2\nu)
\end{cases}\label{eq:gopt}
\end{align}
where $\overline{PTV}(\lambda_1\gamma_1 \parallel \lambda_1\mu) = PTV(\lambda_1\gamma_1 \parallel \lambda_1\mu) + \iota_\leq(\gamma_1\parallel \mu)$, and $\overline{PTV}(\lambda_2\gamma_2 \parallel \lambda_2\nu)$ is defined similarly.

We refer to $TV(\lambda_1\gamma_1 \parallel \lambda_1\mu)$ (or similarly for $\overline{PTV}$) as the ``\textit{first penalty term}'' and $TV(\lambda_2\gamma_2 \parallel \lambda_2\nu)$ (and similarly for $\overline{PTV}$) as the ``\textit{second penalty term}''. Intuitively, $\lambda_1(x)$ models the penalty of mass destruction/creation on the source measure $\mu$ at point $x$, $\lambda_2(y)$ models the penalty of mass destruction/creation on $\nu$ at point $y$. 

\begin{remark}
When $\lambda_1 = \lambda_2 \equiv \lambda$, this formulation reduces to the classical OPT problem as defined in \eqref{eq:opt_1}.
\end{remark}

\begin{remark}\label{rk:partial_transport}
Unlike the classical OPT problem \eqref{eq:opt_1} and \eqref{eq:opt_2}, in this formulation, when selecting $TV(\lambda_1\gamma_1 \parallel \lambda_1\mu)$ and $TV(\lambda_2\gamma_2 \parallel \lambda_2\nu)$, we are not restricted to searching within $\Gamma_\leq(\mu, \nu)$ in $\mathcal{M}_+(\Omega^2)$. For instance, consider $\mu = \delta_0$ and $\nu = \delta_0 + \delta_1$ with $\lambda_1 \equiv 0$ and $\lambda_2 \equiv 100$. The optimal transportation plan, denoted by $\gamma$, would be $\delta_{(0,0)} + \delta_{(1,1)}$, yielding a zero cost in \eqref{eq:gopt}. However, $\gamma_1 = \delta_0 + \delta_1 \not\leq \delta_0 = \mu$. 
\end{remark}
\textit{In the rest of this note, we will use $\begin{cases}
    A\\ 
    B
\end{cases}$ to denote the $\begin{cases}
    A\\ 
    \text{or }\\
    B\end{cases}$ for simplicity.}

\subsection{Entropic Regularization}
Given $\gamma \in \mathcal{M}_+(\Omega^2)$, if $\gamma \ll dxdy$, the negative entropy term is defined as:
\begin{align}
-H(\gamma) &:= \int_{\Omega^2} \left(\ln\left(\frac{d\gamma}{dxdy}\right)-1\right) \frac{d\gamma}{dxdy}dxdy = KL(\gamma \parallel dxdy) - 1\label{eq:H(gamma)}.
\end{align}
If $\gamma \not\ll dxdy$, we emphasize the difference by still using $\frac{d\gamma}{dxdy} dxdy$ in the definition to indicate that $\frac{d\gamma}{dxdy} dxdy \neq d\gamma$ in this case.

With a chosen $\epsilon > 0$, we define the entropic (generalized) optimal transport problems based on \eqref{eq:gopt}:
\begin{align}
EGOPT(\mu, \nu; \lambda_1, \lambda_2, \epsilon) &:= \inf_{\substack{\gamma \in \mathcal{M}_+(\Omega^2) \\ \gamma \ll dxdy}} \langle c, \gamma \rangle - \epsilon H(\gamma) + \begin{cases}
TV(\lambda_1\gamma_1 \parallel \lambda_1\mu) \\
PTV(\lambda_1\gamma_1 \parallel \lambda_1\mu)
\end{cases}+ \begin{cases}
TV(\lambda_2\gamma_2 \parallel \lambda_2\nu) \\
PTV(\lambda_2\gamma_2 \parallel \lambda_2\nu)
\end{cases} \nonumber\\ 
&=\inf_{\substack{\gamma \in \mathcal{M}_+(\Omega^2) \\ \gamma \ll dxdy}} -\epsilon KL(\gamma\parallel K) + \begin{cases}
TV(\lambda_1\gamma_1 \parallel \lambda_1\mu) \\
PTV(\lambda_1\gamma_1 \parallel \lambda_1\mu)
\end{cases}+ \begin{cases}
TV(\lambda_2\gamma_2 \parallel \lambda_2\nu) \\
PTV(\lambda_2\gamma_2 \parallel \lambda_2\nu)
\end{cases}+\text{constant}\label{eq:egopt}.
\end{align}

where in \eqref{eq:egopt}, $K$ is defined as $K := e^{-c/\epsilon}$. The equation follows from the fact:
\begin{align}
KL(\gamma | Kdxdy) &= \int \left(\ln\left(\frac{d\gamma}{Kdxdy}\right) - 1\right) \frac{d\gamma}{dxdy}dxdy + \|Kdxdy\| \nonumber \\
&= \int \left(\ln\left(\frac{d\gamma}{dxdy}\right) - 1\right) \frac{d\gamma}{dxdy}dxdy + \int \frac{c(x,y)}{\epsilon}d\gamma + \|Kdxdy\| \nonumber \\
&= \frac{1}{\epsilon} \left(-H(\gamma) + \langle c, \gamma \rangle \right) + \|Kdxdy\|.
\end{align}
As $\|Kdxdy\|$ is a non-negative constant, independent of $\gamma$, it is disregarded in further discussions.

\begin{remark}
Rigorously speaking, whether in the general entropic unbalanced OT case or in the entropic GOPT, setting $\epsilon = 0$ does not guarantee equivalence between the entropic and the original versions since the optimal solution in the original version might not satisfy $\gamma \ll dxdy$. The convergence of the minimizer of EGOPT \eqref{eq:egopt} to a minimizer in GOPT \eqref{eq:gopt} under general conditions remains an open question.

However, if a minimizer in \eqref{eq:gopt} is absolutely continuous with respect to $dxdy$ and $\Omega$ is finite, as $\epsilon \to 0$, the minimizer of EGOPT converges to this minimizer of GOPT, as shown in \cite[Proposition 3.1]{chizat2018scaling}.
\end{remark}

\subsection{Dual formulation of entropic (generalized) OPT problems}
In this section, we introduce the dual formulation of the entropic (generalized) OPT problem defined in \eqref{eq:egopt}, which equal the original problems (up to a constant): 
\begin{align}
&Dual-EGOPT(\mu,\nu;\lambda_1,\lambda_2)\nonumber\\
&=\sup_{\substack{\phi\in L^\infty(\Omega,dx)\\
\psi\in L^\infty(\Omega,dy)}}
-\epsilon\langle (e^{\frac{\phi\oplus \psi}{\epsilon}}-1)K,dxdy\rangle+
\begin{cases}
\langle \min(\phi,\lambda_1),\mu\rangle+\iota_{\phi\in\mathcal{S}(\lambda_1,dx)}\\
\langle \min(\phi,\lambda_1),\mu\rangle
\end{cases}+
\begin{cases}
\langle \min(\psi,\lambda_2),\nu\rangle+\iota_{\psi\in\mathcal{S}(\lambda_2,dy)}\\
\langle \min(\psi,\lambda_2),\nu\rangle
\end{cases}\label{eq:egopt_dual}
\end{align}
where 
$$\mathcal{S}(\lambda_1,dx):=\{\phi:\phi\ge -\lambda_1,dx-a.s.; \phi=0, dx\mid_{\lambda_1=0}-a.s.\}$$
and $\mathcal{S}(\lambda_2,dy)$ is defined similarly. 

The rest of this section is the derivation of this formulations (in particular, see Remark \ref{rk:dual_form}) and related basic concepts. The main idea is based on \cite[Theorme 3.1]{chizat2018scaling}  and related concepts. 

We first setup some notations and settings. Since $\gamma\ll dxdy$ in EGOPT problem. We have $\gamma_1\ll dx$ and $\gamma_2\ll dy$. Thus we can write $\gamma_1=\frac{\gamma_1}{dx} dx,\gamma_2=\frac{d\gamma_2}{dy}dy$ and can use $r=\frac{\gamma_1}{dx}$ to represent $\gamma_1$, where $r\in L_1(\Omega,dx)$ and similar to $\gamma_2$.

\subsubsection{Background: convex function, conjugate, dual formulation.}
Let $(E,E^*)$ be a pair of convex Hausdorff topologically paired vector spaces, so that the elements of each space can be identified with a continuous linear functional on the other. We refer to \cite{rockafellar1967duality} for details. The topological pairing is the bi-linear form: 
\begin{align}
 (E,E^*) \ni (s,s') \mapsto \langle s,s' \rangle\in \mathbb{R} \label{eq:inner_product} 
\end{align}

\begin{example}
$(L_1(\Omega,dx),L_\infty(\Omega,dx))$ is such a topologically paired vector space. In particular, we have: 
$(L_1(\Omega,dx))^* = L_\infty(\Omega,dx)$ and $(L_\infty(\Omega,dx))^* \supset L_1(\Omega,dx)$. Additionally, the topological pairing is defined by:
$$\langle f,g \rangle = \int_{\Omega} fg  dx $$
\end{example}

\begin{example}
$(\mathbb{R}^d, \mathbb{R}^d)$ is another instance of topologically paired vector spaces. Specifically, $(\mathbb{R}^d)^* = \mathbb{R}^d$. The topological pairing can be described by:
$$\langle s, s' \rangle = \sum_{i=1}^d s_i s'_i$$
\end{example}

A function $f: E \to \bar{\mathbb{R}} = [-\infty, \infty]$ is termed a convex function if it satisfies:
\begin{align}
    f((1-\alpha)x + \alpha y) \leq (1-\alpha)f(x) + \alpha f(y) \label{eq:convex}
\end{align}
for all $x, y \in E$ and $\alpha \in [0,1]$. The domain of $f$, denoted $D(f)$, is:
$$D(f) = \{x \in E \mid f(x) < \infty\}.$$
$f$ is a proper function if $D(f) \cap \{f > -\infty\} \neq \emptyset$.

The convex conjugate of a function $f: E \to [-\infty, \infty]$, denoted as $f^*: E^* \to [-\infty, \infty]$, is defined as:
\begin{align}
f^*(s') = \sup_{x \in E} \langle x, s' \rangle - f(x) \label{eq:f*}
\end{align}

\begin{example}
The entropy functions defined in previous examples have the following convex conjugates:
\begin{align}
f_{KL}^*(s') &= \exp(s') - 1 \label{eq:f_KL*} \\
f_{TV}^*(s') &= \begin{cases}
    \max(s', -1) & \text{if } s' \leq 1 \\
    \infty & \text{if } s' > 1 
\end{cases} \label{eq:f_tv*} \\
f_{PTV}^*(s') &= \max(s', -1) \label{eq:f_PTV*} \\
\iota^*_{\{0\}}(s') &= s' \label{f_=*} \\
\bold{0}^*(s') &= \iota_{\{0\}} \label{f_0*} \\
\iota_{[0,1]}^*(s') &= \begin{cases}
    0 & \text{if } s' \leq 0 \\
    s' & \text{if } s' \geq 0 
\end{cases} \label{eq:f_[0,1]*}
\end{align}
\end{example}

\begin{example}
Let $E = L_1(X, dx)$ and $E^* = L_\infty(X, dx)$. 
Consider the mapping 
$$E \ni r \mapsto D_{f}(r  dx \parallel \mu) := \langle f(\frac{r}{d\mu/dx}), \mu \rangle + f'_\infty \| (r  dx)^\perp \| \in \mathbb{R} \cup \{\infty\},$$
where $f$ is an entropy function. 

According to \cite[Theorem A.3]{chizat2018scaling}, this mapping is a proper convex, weakly lower-semi continuous function in $L_1(X, dx)$. Its conjugate is given by 
\begin{align}
E^* \ni \phi \mapsto D_{f}^*(\phi  dx \parallel \mu) := \langle f^*(\phi), \mu \rangle + \langle \iota_{\leq f_\infty'} \phi, dx \rangle . \label{eq:Df*}
\end{align}

Thus, we have the following:
\begin{itemize}
    \item Let $E = L_1(\Omega^2, dxdy)$ and $E^* = L_\infty(\Omega^2, dxdy)$, we have: 
    \begin{align}
     KL^*(\phi  dxdy \parallel K  dxdy) = \langle (e^{\frac{\phi}{\epsilon}} - 1)K, dxdy \rangle \label{KL*}
    \end{align}
    
    \item Let $E = L_1(\Omega, dx)$ and $E^* = L_\infty(\Omega, dx)$. Consider the mapping: 
    \begin{align}
     r \mapsto TV(\lambda_1r dx \parallel \lambda_1  \mu).
    \end{align}
    First, it is proper convex since 
    $$r' \mapsto TV(r'  dx \parallel \lambda_1  \mu)$$ 
    is a proper convex function, and 
    $$E_1 \ni r \mapsto r' = \lambda_1  r \in E_1$$ 
    is a linear operator. 
    
    By Lemma \ref{lem:TV*}, its conjugate, denoted as $TV^*(\lambda_1  \phi  dx \parallel \lambda_1  \mu)$, is given by:
    \begin{align}
    TV^*(\phi  \lambda_1  dx \parallel \lambda_1  \mu) = \begin{cases}
    \langle \max(\phi, -\lambda_1), \mu \rangle & \text{if } \phi \leq \lambda_1,  dx\text{-a.s.};  \phi = 0,  dx\mid_{\lambda_1=0}\text{-a.s.} \\ 
    \infty & \text{elsewhere}
    \end{cases} \label{eq:TV*}
    \end{align}

    Similarly, replace $TV$ with $PTV$, we have: 
    \begin{align}
    PTV^*(\phi  \lambda_1  dx \parallel \lambda_1  \mu) = \begin{cases}
    \langle \max(\phi, -\lambda_1), \mu \rangle & \text{if } \phi = 0,  dx\mid_{\lambda_1=0} \\ 
    \infty & \text{elsewhere}
    \end{cases} \label{eq:PTV*_0}    
    \end{align}
    In addition, by Lemma \ref{lem:PTV*}, we have: 
    \begin{align}
    \overline{PTV}^*(\phi  \lambda_1  dx \parallel \lambda_1  \mu) = \langle \max(\phi, -\lambda_1), \mu \rangle \label{eq:PTV*}
    \end{align}

    \item Let $E = L_1(\Omega, dx)$ and $E^* = L_\infty(\Omega, dx)$. We have: 
    \begin{align}
    \iota^*_{=}(\phi  dx \parallel \mu) &= \langle \phi, \mu \rangle \label{eq:D_=*}\\ 
    D_{\bold{0}}^*(\phi  dx \parallel \mu) &= \begin{cases}
        0 & \text{if } \phi \leq 0,  dx\text{-a.s.}\\
        \infty & \text{elsewhere}
    \end{cases} \label{eq:D_+*}\\
    \iota^*_{\leq}(\phi  dx \parallel \mu) &= \langle \max(\phi, 0), \mu \rangle \label{eq:D_<=*}
    \end{align}
\end{itemize}
\end{example}

\begin{lemma}\label{lem:TV*}
We claim \eqref{eq:TV*} holds.
\end{lemma}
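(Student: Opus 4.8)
\emph{The plan is to} compute the convex conjugate $G^*$ of the proper convex functional $G(r):=TV(\lambda_1 r\,dx\parallel\lambda_1\mu)$ on $L_1(\Omega,dx)$ directly from the definition \eqref{eq:f*}, and there are two routes that meet. The quick route reduces $G$ to the already-established conjugate \eqref{eq:Df*}: writing $F(r'):=TV(r'\,dx\parallel\lambda_1\mu)$ we have $G=F\circ A$ for the multiplication operator $Ar=\lambda_1 r$, whose adjoint in the $L_1$--$L_\infty$ pairing is again $A^*\psi=\lambda_1\psi$, so the conjugate-of-a-composition rule suggests $G^*(\phi)=\inf\{F^*(\psi):\lambda_1\psi=\phi\}$. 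The self-contained route, which I would actually carry out, evaluates the supremum directly and pointwise.

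For the direct computation I would first make $G$ explicit. Since the note assumes $\mu\ll dx$, write $\mu=\tfrac{d\mu}{dx}\,dx$; because $\lambda_1 r\,dx$ and $\lambda_1\mu$ are both absolutely continuous, $TV(\lambda_1 r\,dx\parallel\lambda_1\mu)=\int_\Omega\lambda_1\,\big|r-\tfrac{d\mu}{dx}\big|\,dx$, where the nonnegativity constraint on $\tfrac{d(\lambda_1 r\,dx)}{d(\lambda_1\mu)}$ inherited from the domain of $f_{TV}$ (see \eqref{eq:f_tv}) forces $r\ge 0$ on $\{\lambda_1\tfrac{d\mu}{dx}>0\}$ while leaving $r$ unconstrained where $\lambda_1=0$. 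Hence
$$G^*(\phi)=\sup_{r}\ \int_\Omega\Big(\phi\,r-\lambda_1\,\big|\,r-\tfrac{d\mu}{dx}\,\big|\Big)\,dx.$$
Interchanging the supremum with the integral (the integrand is a normal convex integrand, so this is legitimate up to a measurable-selection/maximizing-sequence argument), the problem splits into the scalar maxima $\sup_{t}\phi(x)t-\lambda_1(x)|t-\tfrac{d\mu}{dx}(x)|$ over the admissible range of $t$. On $\{\lambda_1>0\}$ this evaluates to $\max(\phi,-\lambda_1)\tfrac{d\mu}{dx}$ when $\phi\le\lambda_1$ and to $+\infty$ when $\phi>\lambda_1$; on $\{\lambda_1=0\}$ the bracket is linear in the free variable $t$, so the supremum is $0$ if $\phi=0$ and $+\infty$ otherwise. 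Collecting these, $G^*(\phi)=\langle\max(\phi,-\lambda_1),\mu\rangle$ exactly on the set where $\phi\le\lambda_1$ $dx$-a.s.\ and $\phi=0$ on $dx\!\mid_{\lambda_1=0}$-a.s., which is \eqref{eq:TV*}. (Equivalently, in the quick route, $\lambda_1\psi=\phi$ is solvable on $\{\lambda_1=0\}$ only if $\phi=0$ there, and setting $\psi=\phi/\lambda_1$ on $\{\lambda_1>0\}$ turns $\psi\le 1$ into $\phi\le\lambda_1$ and $\langle\max(\psi,-1),\lambda_1\mu\rangle$ into $\langle\max(\phi,-\lambda_1),\mu\rangle$.)

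\emph{The hard part} is twofold, and both difficulties live exactly where $\lambda_1$ degenerates. First, the interchange of $\sup$ and $\int$ and the claim that the pointwise optimizers assemble into an admissible competitor (or a maximizing sequence in $L_1(\Omega,dx)$) must be justified, since the naive pointwise maximizer need not be integrable; this is where I expect to spend the most effort, using the weak lower semicontinuity and properness of $G$ quoted in the Example together with a truncation argument. Second, one must track the domain constraint coming from $f_{TV}$ carefully: the one-sided condition $\phi\le\lambda_1$ (rather than $|\phi|\le\lambda_1$) together with the separate condition $\phi=0$ on $\{\lambda_1=0\}$ are exactly what distinguish the two regimes, and they come out clean only under the note's standing convention that $\tfrac{d\mu}{dx}>0$ $dx$-a.s.\ (equivalently $\mu\sim dx$); away from that convention the set $\{\tfrac{d\mu}{dx}=0,\ \lambda_1>0\}$ would contribute a spurious two-sided constraint, so I would either invoke this convention or restrict to $\operatorname{supp}\mu$. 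In the quick route the analogous obstacle is that $A$ is not invertible and its range is not closed ($\lambda_1$ may tend to $0$, so $\phi/\lambda_1\notin L_\infty$), which is why I prefer to verify $G^*$ by the direct two-sided estimate rather than quoting an infinite-dimensional composition theorem.
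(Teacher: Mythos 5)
Your proposal is correct and follows essentially the same route as the paper: the paper's own proof also splits $\Omega$ into $\{\lambda_1>0\}$ and $\{\lambda_1=0\}$, reduces the first piece to the known conjugate of $r'\mapsto TV(r'\,dx\parallel\lambda_1\mu)$ via the substitution $r'=\lambda_1 r$ (your ``quick route''), and observes that the supremum over the degenerate set is $0$ if $\phi=0$ there and $+\infty$ otherwise, before collecting the three cases. Your additional remark about the set $\{\tfrac{d\mu}{dx}=0,\ \lambda_1>0\}$ potentially forcing a two-sided constraint is a genuine subtlety the paper's proof does not address explicitly, and is worth the caveat you give.
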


\begin{proof}
From \eqref{eq:Df*}, we have for each $\phi' \in L_\infty(\Omega, dx)$:  
\begin{align}
TV^*(\phi'  dx \parallel \lambda_1  \mu) := \max_{r} \left( \langle \lambda_1  r  \phi', dx \rangle - TV(\lambda_1  r  dx \parallel \lambda_1  \mu) \right) = \langle \max(\phi', -1), \lambda_1  \mu \rangle + \langle \iota_{\ge 1}(\phi'), dx \rangle \label{pf:TV*_lambda1}
\end{align}

Let $\Omega_1 = \{x: \phi > 0\}$ and $\Omega_2 = \{x: \phi = 0\}$. For each function $r \in E$, let $r \mid_{\Omega_1}, r \mid_{\Omega_2}$ denote the restriction of $r$ on $\Omega_1$ and $\Omega_2$ respectively, and the restriction of measure is defined similarly.

We have: 
\begin{align}
TV^*(\lambda_1  \phi  dx \parallel \lambda_1  \mu) &= \max_{r} \left( \langle r  \phi, dx \rangle - TV(\lambda_1  r  dx \parallel \lambda_1  \mu) \right) \nonumber\\
&= \max_{r} \left( \langle (r \mid_{\Omega_1} + r \mid_{\Omega_2})  \phi, dx \rangle - TV(\lambda_1  r \mid_{\Omega_1}  dx \parallel \lambda_1  \mu) \right) \nonumber\\
&= \max_{r \mid_{\Omega_1}} \left( \langle r \mid_{\Omega_1}  \phi, dx \rangle - TV(\lambda_1  r \mid_{\Omega_1} dx \parallel \lambda_1\mu ) \right) + \max_{r \mid_{\Omega_2}} \langle r \mid_{\Omega_2}  \phi, dx  \rangle \nonumber\\
&= \underbrace{{TV}^* \left( \frac{\phi \mid_{\Omega_1}}{\lambda_1 \mid_{\Omega_1}}  dx \parallel \lambda_1 \mu \right)}_{B_1} + \underbrace{\max_{r \mid_{\Omega_2}} \langle r \mid_{\Omega_2}  \phi, dx \rangle}_{B_2} \nonumber
\end{align}

where we use the convention $\frac{0}{0} = 1$ for the notation $\frac{\phi \mid_{\Omega_1}}{\lambda_1\mid_{\Omega_1}}$.

It is straightforward to verify $B_1, B_2 \ge -\infty$.

Case 1: If $dx \mid_{\Omega_2}(\{\phi \neq 0\}) > 0$, then $B_2 = \infty$.

Case 2: If $dx \mid_{\Omega_1}(\{\phi > \lambda_1\}) > 0$, then we have: 
$$B_1 \ge \langle \iota_{\ge 1} \frac{\phi \mid_{\Omega_1}}{\lambda_1 \mid_{\Omega_1}}, dx \mid_{\Omega_1} \rangle - \| \lambda_1  \mu \| = \infty.$$

Case 3: If $\phi \leq \lambda_1$, $dx$-a.s., and $\phi = 0$, $dx \mid_{\lambda_1=0}$-a.s., then: 
\begin{align}
    B_1 = \langle \max(\phi, -\lambda_1), \mu \rangle,  B_2 = 0
\end{align}
Thus, we complete the proof. 
\end{proof}

Next, we introduce the sub-differential operator of a convex function $f$:  
\begin{align}
\partial f(x)=\{y\in E^*: \langle x'-x,y \rangle \leq f(x')-f(x), \forall x' \in E \}
\end{align}

\begin{lemma}\label{lem:product_space}
Given topological pairs $(E_1, E_1^*)$ and $(E_2, E_2^*)$, it is a well-known result that $(E_1 \times E_2, E_1^* \times E_2^*)$ defines a topological pair with the pairing 
$$(E_1 \times E_2, E_1^* \times E_2^*) \ni \left((x_1, x_2), (x_1^*, x_2^*)\right) \mapsto \langle x_1, x_1^* \rangle + \langle x_2, x_2^* \rangle.$$

If $E_1 \ni x_1 \mapsto f_1(x_1)$ and $E_2 \ni x_2 \mapsto f_2(x_2)$ are two functions, then $(f_1 \oplus f_2)^* = f_1^* \oplus f_2^*$. In addition, if $f_1$ and $f_2$ are convex functions, then $f_1 \oplus f_2$ is convex.  
\end{lemma}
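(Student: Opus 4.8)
The plan is to prove the two assertions of Lemma \ref{lem:product_space} separately, since the topological pairing claim is stated as well-known and requires only a routine verification. First I would establish that the product pairing is nondegenerate and separating, so that $(E_1\times E_2, E_1^*\times E_2^*)$ is indeed a compatible dual pair: given a nonzero $(x_1,x_2)$, at least one coordinate is nonzero, and by nondegeneracy of the component pairings one finds a functional in the corresponding $E_i^*$ that separates it, which lifts to a separating functional on the product by setting the other coordinate to $0$. This part is bookkeeping and I would state it briefly.

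The substantive content is the conjugate formula $(f_1\oplus f_2)^* = f_1^*\oplus f_2^*$, where $f_1\oplus f_2$ denotes the function $(x_1,x_2)\mapsto f_1(x_1)+f_2(x_2)$. My approach is to compute directly from the definition of the convex conjugate \eqref{eq:f*}. Writing out
\begin{align}
(f_1\oplus f_2)^*(x_1^*,x_2^*) &= \sup_{(x_1,x_2)\in E_1\times E_2}\Big(\langle x_1,x_1^*\rangle + \langle x_2,x_2^*\rangle - f_1(x_1) - f_2(x_2)\Big),\nonumber
\end{align}
the key observation is that the objective separates additively into a term depending only on $x_1$ and a term depending only on $x_2$. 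Since the feasible set is the full product and the two variables range independently, the supremum over the product factors as the sum of two independent suprema. I would justify this separation of the supremum carefully, noting it holds even in the extended reals $[-\infty,\infty]$ provided we avoid the indeterminate $\infty - \infty$; this is guaranteed when $f_1,f_2$ are proper, which is the setting of interest. Carrying out the factorization yields exactly $f_1^*(x_1^*) + f_2^*(x_2^*) = (f_1^*\oplus f_2^*)(x_1^*,x_2^*)$.

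For the final assertion, that $f_1\oplus f_2$ is convex when $f_1$ and $f_2$ are, I would apply the definition of convexity \eqref{eq:convex} coordinatewise: for $(x_1,x_2),(y_1,y_2)\in E_1\times E_2$ and $\alpha\in[0,1]$, the convex combination in the product space is $((1-\alpha)x_1+\alpha y_1,(1-\alpha)x_2+\alpha y_2)$, and evaluating $f_1\oplus f_2$ there splits into the two coordinate inequalities, which add to give the desired bound. Sums of convex functions are convex, so this is immediate.

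The main obstacle, such as it is, lies in the separation-of-supremum step in the conjugate computation: one must be careful about the extended-real arithmetic to ensure $\sup(a+b)=\sup a + \sup b$ does not run into an $\infty-\infty$ ambiguity. I expect this to be the only point requiring genuine attention; the convexity and pairing claims are standard. I would therefore handle the properness hypothesis explicitly and note that the formula extends to improper cases by the usual conventions if needed.
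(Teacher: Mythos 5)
Your proposal is correct and matches the paper's approach: the paper simply asserts that the lemma "directly follows from the definition of convexity and conjugate functions," and your direct computation — splitting the supremum over the independent product variables (with the properness caveat to avoid $\infty-\infty$) and checking convexity coordinatewise — is exactly the routine verification being invoked. No gaps.
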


The proof directly follows from the definition of convexity and conjugate functions. 

\begin{lemma}\label{lem:PTV*}
We claim that the function $r \mapsto \overline{D}(\lambda_1 r  dx \parallel \lambda_1  \mu)$ is a proper, lower-semi continuous, convex function. 
In addition, $\overline{PTV}^*$ is given by \eqref{eq:PTV*}.     
\end{lemma}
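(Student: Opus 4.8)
The plan is to first reduce $\overline{PTV}$ to a transparent closed form and then compute its conjugate directly, mirroring but simplifying the argument used for Lemma \ref{lem:TV*}. Write $m = d\mu/dx \in L_1(\Omega,dx)$ (finite by the standing assumption $\mu \ll dx$, under which $r\,dx \leq \mu$ is equivalent to $0 \leq r \leq m$ $dx$-a.e.). I would start from the identity $f_{PTV} = f_{TV} + \iota_{[0,1]}$ recorded in the Remark, which gives $PTV(\lambda_1 r\,dx \parallel \lambda_1\mu) = TV(\lambda_1 r\,dx \parallel \lambda_1\mu) + \iota_\leq(\lambda_1 r\,dx \parallel \lambda_1\mu)$. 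Since $r\,dx \leq \mu$ forces $\lambda_1 r\,dx \leq \lambda_1\mu$, the term $\iota_\leq(\lambda_1 r\,dx \parallel \lambda_1\mu)$ is absorbed by $\iota_\leq(r\,dx \parallel \mu)$, so the definition $\overline{PTV} = PTV + \iota_\leq$ collapses to
$$\overline{PTV}(\lambda_1 r\,dx \parallel \lambda_1\mu) = TV(\lambda_1 r\,dx \parallel \lambda_1\mu) + \iota_\leq(r\,dx \parallel \mu),$$
which on its effective domain $\{0 \leq r \leq m\}$ equals $\int_\Omega \lambda_1(m-r)\,dx$.

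For the first assertion (proper, convex, weakly lower-semicontinuous), I would argue termwise. Both $r' \mapsto TV(r'\,dx \parallel \lambda_1\mu)$ and $r \mapsto \iota_\leq(r\,dx \parallel \mu) = D_{\iota_{[0,1]}}(r\,dx \parallel \mu)$ are proper convex weakly l.s.c.\ on $L_1$ by \cite[Theorem A.3]{chizat2018scaling}; precomposing the former with the bounded (hence weak--weak continuous) linear map $r \mapsto \lambda_1 r$ preserves these properties, and a sum of proper convex weakly l.s.c.\ functions is again such. Since both summands are nonnegative, the function never equals $-\infty$, and evaluating at $r = m$ (where both terms vanish) shows properness.

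For the conjugate I would compute directly. Using the reduced form, the $\iota_\leq$ term confines the supremum to $0 \leq r \leq m$, on which the objective is affine in $r$:
$$\overline{PTV}^*(\lambda_1 \phi\,dx \parallel \lambda_1\mu) = \sup_{0 \leq r \leq m} \int_\Omega \big( r(\phi + \lambda_1) - \lambda_1 m \big)\,dx.$$
The integrand is maximized pointwise by the bang-bang selection $r^\star = m\,\mathbf{1}_{\{\phi + \lambda_1 \geq 0\}}$, giving pointwise value $m\max(\phi + \lambda_1, 0) - \lambda_1 m = m\max(\phi, -\lambda_1)$ and hence $\overline{PTV}^*(\lambda_1 \phi\,dx \parallel \lambda_1\mu) = \langle \max(\phi, -\lambda_1), \mu \rangle$, which is \eqref{eq:PTV*}.

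The crucial point, and the reason no indicator constraint survives in contrast to \eqref{eq:TV*} and \eqref{eq:PTV*_0}, is the behaviour on $\{\lambda_1 = 0\}$: there the $TV$ penalty is silent, so for $TV$ and $PTV$ the supremum over an unconstrained $r$ blows up unless one imposes $\phi = 0$, whereas here $\iota_\leq(r\,dx \parallel \mu)$ still caps $r \leq m$, keeping the supremum finite and equal to $\int_{\{\lambda_1=0\}} m\max(\phi,0)\,dx = \int_{\{\lambda_1=0\}} m\max(\phi,-\lambda_1)\,dx$, consistent with the global formula. I expect the main obstacle to be the rigorous justification of the sup--integral interchange: one must check that $r^\star$ is measurable and lies in $L_1(\Omega,dx)$ (immediate from $0 \leq r^\star \leq m$), that it is admissible, and that it attains the supremum. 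This is the standard argument for maximizing a linear functional over an order interval, but it should be stated carefully, together with the reduction $\mu \ll dx$ that legitimizes the equivalence $r\,dx \leq \mu \iff 0 \leq r \leq m$.
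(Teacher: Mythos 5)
Your proof is correct, but it takes a different route from the paper's. The paper splits $\Omega$ into $\Omega_1=\{\lambda_1>0\}$ and $\Omega_2=\{\lambda_1=0\}$, writes $\overline{PTV}(\lambda_1 r\,dx\parallel\lambda_1\mu)$ as the direct sum of $PTV(\lambda_1 r|_{\Omega_1}dx\parallel\lambda_1\mu|_{\Omega_1})$ and $\iota_\leq(r|_{\Omega_2}dx\parallel\mu|_{\Omega_2})$, and then assembles the conjugate from the previously recorded formulas \eqref{eq:PTV*_0} and \eqref{eq:D_<=*}, using $\int_{\Omega_1}\max(\phi,-\lambda_1)\,d\mu+\int_{\Omega_2}\max(\phi,0)\,d\mu=\langle\max(\phi,-\lambda_1),\mu\rangle$. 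You instead collapse $\overline{PTV}$ to the explicit affine functional $\int_\Omega\lambda_1(m-r)\,dx$ on the order interval $\{0\le r\le m\}$ and compute the conjugate in one step by pointwise bang-bang maximization, $r^\star=m\,\mathbf{1}_{\{\phi+\lambda_1\ge0\}}$. Your version is more self-contained (it does not lean on the conjugate formulas for $PTV$ and $\iota_\leq$ established elsewhere) and makes transparent exactly why no indicator constraint survives on $\{\lambda_1=0\}$ — the cap $r\le m$ keeps the supremum finite there — whereas the paper's version is shorter given the machinery already in place and localizes that same observation to the $\Omega_2$ piece. The points you flag as needing care (the equivalence $r\,dx\le\mu\iff 0\le r\le m$ under $\mu\ll dx$, measurability and admissibility of $r^\star$, and the sup–integral interchange) are exactly the right ones, and all go through; the properness and weak lower semicontinuity argument by precomposition with the bounded linear map $r\mapsto\lambda_1 r$ matches the paper's treatment of the analogous step in Lemma \ref{lem:TV*}.
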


\begin{proof}
We can split the space $\Omega$ into $\Omega_1 = \{x: \lambda_1 > 0\}$ and $\Omega_2 = \{x: \lambda_1 = 0\}$. 
Then,
\begin{align}
\overline{PTV}(\lambda_1 r  dx \parallel \lambda_1  \mu) &= PTV(\lambda_1 r  dx \parallel \lambda_1  \mu) + \iota_\leq(r  dx \parallel \mu) \nonumber\\
&= \underbrace{PTV(\lambda_1 r\mid_{\Omega_1} dx \parallel \lambda_1 \mu \mid_{\Omega_1})}_{A_1} + \underbrace{\iota_{\leq}(r  \mid_{\Omega_2}dx  \parallel \mu \mid_{\Omega_2})}_{A_2} \nonumber 
\end{align}
The mappings $r \mid_{\Omega_1} \mapsto A_1$ and $r \mid_{\Omega_2} \mapsto A_2$ are proper, convex, weakly lower semi-continuous functions. Thus, $\overline{PTV}(\lambda_1 r  dx \parallel \lambda_1  \mu)$ is proper, convex, and weakly lower semi-continuous. 

In addition, its conjugate is given by 
\begin{align}
\overline{PTV}^*(\phi \lambda_1  dx \parallel \lambda_1  \mu) &= PTV^*(\lambda_1 \phi \mid_{\Omega_1} \parallel \lambda_1  dx \mid_{\Omega_1}) + \iota_{\leq}^*(\phi \mid_{\Omega_2} \parallel \mu \mid_{\Omega_2}) \nonumber\\
&= \int_{\Omega_1} \max(\phi, -\lambda_1)  d\mu + \int_{\Omega_2} \max(\phi, 0)  d\mu \nonumber\\
&= \langle \max(\phi, -\lambda_1), \mu \rangle \nonumber  
\end{align}
and this completes the proof. 
\end{proof}

Given topological pairs $(E_1, E^*_1)$ and $(E_2, E^*_2)$, suppose $A: E_1 \to E_2$ is a continuous linear operator. Its dual operator $A^*: E^*_2 \to E_1^*$ is a continuous linear operator such that 
$$\langle A s_1, s^*_2 \rangle = \langle s_1, A^* s^*_2 \rangle, \forall s_1 \in E_1, s^*_2 \in E^*_2. $$

\begin{example}\label{ex:A}
Consider $(E_1 = L_1(\Omega^2, dxdy), E_1^* = L_\infty(\Omega^2, dxdy))$ and $(E_2 = L_1(\Omega, dx) \times L_1(\Omega, dy), E^*_2 = L_\infty(\Omega, dx) \times L_\infty(\Omega, dy))$. 
For $(E_2, E_2^*)$, the topological pairing is defined by: 
\begin{align}
\left( \left( \frac{d\gamma_1}{dx}, \frac{d\gamma_2}{dy} \right), (\phi, \psi) \right) := \left\langle \frac{d\gamma_1}{dx}, \phi  dx \right\rangle + \left\langle \frac{d\gamma_2}{dy}, \psi  dy \right\rangle.
\end{align}

The following is a linear continuous mapping:
$$E_1 \ni \frac{d\gamma}{dxdy} \to A\left(\frac{d\gamma}{dxdy}\right) = \left(\frac{\gamma_1}{dx}, \frac{\gamma_2}{dy}\right) \in E_2,$$
then its dual operator $A^*$ is defined by:
$$
E_2^* \ni (\phi, \psi) \to A^*(\phi, \psi) = \phi \oplus \psi \in E_1^*.
$$
\end{example}

Next, we introduce the following celebrated theorem in convex analysis:
\begin{theorem}[Rockafellar Theorem,  [Theorem 3, \cite{rockafellar1967duality}, [Theorem A.1, \cite{chizat2018scaling}]\label{thm:dual}
Given topological pairs $(E_1, E_1^*)$ and $(E_2, E_2^*)$, a linear continuous mapping $A: E_1 \to E_2$ with its dual operator $A^*: E_2^* \to E_1$. Suppose $F: E_1 \to [-\infty, \infty]$ and $G: E_2 \to [-\infty, \infty]$ are proper, lower semi-continuous functions. In addition, there exists $x \in \text{Dom}(F)$ such that $G$ is continuous at $Ax$. 

Then the following strong duality holds: 
\begin{align}
\inf_{x_1 \in E_1} F(x_1) + G(Ax_1) = \sup_{x^*_2 \in E_2^*} -F^*(A^* x_2^*) - G^*(-x_2^*).\label{eq:duality}
\end{align}
Moreover, the infimum is attained by some $x_1$. If $x_2^* \in E_2^*$ is a maximizer, there exists a minimizer $x_1 \in E_1$ such that 
$Ax_1 \in \partial G^*(-x_2^*)$ and $A^* x_2^* \in \partial F(x_1)$.
\end{theorem}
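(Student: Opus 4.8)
The plan is to recognize this as the Fenchel--Rockafellar strong-duality theorem and to prove it via the perturbation (value) function together with the Fenchel--Moreau biconjugation theorem; the constraint qualification (continuity of $G$ at some $Ax_0$ with $x_0\in\mathrm{Dom}(F)$) will enter only to rule out a duality gap and to secure attainment. First I would dispose of the ``$\inf\ge\sup$'' direction directly from the Fenchel--Young inequality: for arbitrary $x_1\in E_1$ and $x_2^*\in E_2^*$, the definition \eqref{eq:f*} of the conjugate gives $F(x_1)+F^*(A^*x_2^*)\ge\langle x_1,A^*x_2^*\rangle=\langle Ax_1,x_2^*\rangle$ and $G(Ax_1)+G^*(-x_2^*)\ge\langle Ax_1,-x_2^*\rangle$; adding these yields $F(x_1)+G(Ax_1)\ge -F^*(A^*x_2^*)-G^*(-x_2^*)$, so weak duality holds, with the two sides separated by the total slack in these two Fenchel--Young inequalities.

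The core of the argument is to close that gap. I would introduce the value function $v(z):=\inf_{x_1\in E_1}\big(F(x_1)+G(Ax_1+z)\big)$ on $E_2$, which is convex because $F,G$ are convex and $A$ is linear, and which satisfies $v(0)=\inf_{x_1}\big(F(x_1)+G(Ax_1)\big)$. Substituting $w=Ax_1+z$ and using the adjoint relation $\langle Ax_1,z^*\rangle=\langle x_1,A^*z^*\rangle$ gives $v^*(z^*)=F^*(-A^*z^*)+G^*(z^*)$, and hence $v^{**}(0)=\sup_{z^*}\big(-v^*(z^*)\big)=\sup_{x_2^*}\big(-F^*(A^*x_2^*)-G^*(-x_2^*)\big)$ after the substitution $x_2^*=-z^*$; that is, the right-hand side of \eqref{eq:duality} is exactly $v^{**}(0)$. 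Since $v^{**}\le v$ pointwise, strong duality is precisely the statement $v(0)=v^{**}(0)$, which holds as soon as $v$ is subdifferentiable at $0$. This is where the qualification is used: for $z$ in a neighborhood of $0$ one has $v(z)\le F(x_0)+G(Ax_0+z)$, which is bounded above because $G$ is continuous (hence locally bounded) at $Ax_0$ while $x_0\in\mathrm{Dom}(F)$; a convex function bounded above on a neighborhood of a point is continuous there, hence subdifferentiable, so $\partial v(0)\ne\emptyset$. Subdifferentiability at $0$ yields both $v(0)=v^{**}(0)$ (no gap) and attainment of the supremum in \eqref{eq:duality} by $x_2^*=-z^*$ for any $z^*\in\partial v(0)$. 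Finally, for an optimal pair the weak-duality chain must be an equality, forcing equality in each Fenchel--Young inequality, i.e. $A^*x_2^*\in\partial F(x_1)$ and $-x_2^*\in\partial G(Ax_1)$; the latter is equivalent to $Ax_1\in\partial G^*(-x_2^*)$ since $G$ is convex, proper and lower semi-continuous, and these are exactly the claimed coupling conditions.

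The main obstacle is the infinite-dimensional functional analysis rather than the algebra. One must invoke Fenchel--Moreau and the implication ``convex $+$ locally bounded above $\Rightarrow$ continuous $\Rightarrow$ subdifferentiable'' in the topologically paired (possibly non-reflexive) setting, where biconjugation is taken with respect to the weak topology induced by the pairing $\langle\cdot,\cdot\rangle$ and subdifferentiability at $0$ rests on a Hahn--Banach separation of $(0,v(0))$ from the interior of $\mathrm{epi}\,v$, whose nonemptiness is furnished by the continuity of $v$. A second delicate point is the \emph{direction} of attainment: the stated qualification is an interiority condition on the $G$-side, so the perturbation above makes the dual supremum attained, and the coupling conditions then characterize any optimal primal--dual pair; guaranteeing in addition that the primal infimum is attained by an $x_1$ paired with a given dual optimizer requires a symmetric qualification (on $F^*$) or a coercivity argument, and I would verify the precise hypotheses of \cite{rockafellar1967duality,chizat2018scaling} under which both extrema are simultaneously attained.
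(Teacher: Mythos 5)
The paper offers no proof of this theorem: it is imported as a citation to \cite{rockafellar1967duality} and to Theorem A.1 of \cite{chizat2018scaling}, so there is no in-paper argument to compare against. Your perturbation-function proof is the standard one and is sound in its essentials: weak duality from the two Fenchel--Young inequalities, the computation $v^*(z^*)=F^*(-A^*z^*)+G^*(z^*)$ so that the right-hand side of \eqref{eq:duality} is $v^{**}(0)$, the chain ``$G$ continuous at $Ax_0$ with $x_0\in\mathrm{Dom}(F)$ $\Rightarrow$ $v$ bounded above near $0$ $\Rightarrow$ $v$ continuous and subdifferentiable at $0$ $\Rightarrow$ $v(0)=v^{**}(0)$ with the supremum attained at $-z^*$ for any $z^*\in\partial v(0)$,'' and the extraction of $A^*x_2^*\in\partial F(x_1)$ and $Ax_1\in\partial G^*(-x_2^*)$ from equality in the Fenchel--Young inequalities. (One hypothesis you use but the statement omits: $F$ and $G$ must be \emph{convex}; the theorem is false without this, and the cited sources do assume it.)

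The caveat you raise in your last paragraph is the one substantive issue, and it is a defect of the statement as transcribed here rather than of your argument. The qualification on the $(F,G)$ side yields attainment of the \emph{supremum} over $x_2^*$ (the conjugate side), which is exactly what your construction delivers; in \cite[Theorem A.1]{chizat2018scaling} the attained extremum is likewise the one involving the conjugates, and the existence statement runs from a given optimizer on the qualified side to one on the conjugate side. The version stated above instead asserts that the \emph{infimum} is attained and produces a minimizer $x_1$ from a given maximizer $x_2^*$; that direction does not follow from the stated hypotheses without a symmetric qualification or a coercivity/compactness argument on the primal side. Your instinct to check the exact hypotheses of the sources is the correct resolution; as used later in the paper (Remark \ref{rk:dual_form}), it is the dual supremum in \eqref{eq:egopt_dual} whose attainment matters, which is the direction your proof actually establishes.
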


\begin{remark}\label{rk:dual_form}
Now we discuss the derivation of \eqref{eq:egopt_dual}. 
Apply $(E_1, E_1^*), (E_2, E_2^*)$, operator $A$, defined in Example \ref{ex:A}, set $F, G$ as 
\begin{align}
E_1 \ni \frac{d\gamma}{dxdy} &\mapsto F\left(\frac{d\gamma}{dxdy}\right):= KL(\gamma \parallel Kdxdy) \nonumber \\
E_2 \ni \left(\frac{d\gamma_1}{dx}, \frac{d\gamma_2}{dy}\right) &\mapsto G\left(\frac{d\gamma_1}{dx}, \frac{d\gamma_2}{dy}\right):= \begin{cases}
TV(\lambda_1 \gamma_1 \parallel \lambda_1 \mu)\\
\overline{PTV}(\lambda_1 \gamma_1 \parallel \lambda_1 \mu)
\end{cases} + 
\begin{cases}
TV(\lambda_2 \gamma_2 \parallel \lambda_2 \nu)\\
\overline{PTV}(\lambda_2 \gamma_2 \parallel \lambda_2 \nu)
\end{cases}.
\end{align}

Then, by lemma \ref{lem:product_space} and the convex conjugate functions \eqref{eq:TV*}, \eqref{eq:PTV*}, we have the convex conjugate functions $F^*, G^*$ as: 
\begin{align}
E_1^* \ni \Phi \mapsto F^*(\Phi) &= \langle (e^{-\Phi/\epsilon} - 1) K, dxdy \rangle \nonumber \\
E_2^* \ni (\phi, \psi) \mapsto G^*(\phi, \psi) &= \begin{cases}
TV^*(\lambda_1 \phi \parallel \lambda_1 \mu) \\
PTV^*(\lambda_1 \phi \parallel \lambda_1 \mu)  
\end{cases} + \begin{cases}
TV^*(\lambda_2 \psi \parallel \lambda_2 \nu) \\
PTV^*(\lambda_2 \psi \parallel \lambda_2 \nu)      
\end{cases} \\
&= \langle \max(\phi, -\lambda_1), \mu \rangle + \langle \max(\psi, -\lambda_2), \nu \rangle + \begin{cases}
\iota_{\phi \in \mathcal{S}^-(-\lambda_1, dx)} \\
0
\end{cases} + \begin{cases}
\iota_{\psi \in \mathcal{S}^-(-\lambda_2, dy)} \\
0
\end{cases} \nonumber
\end{align}
where 
$$\mathcal{S}^-(-\lambda_1, dx) := \{\phi \ge -\lambda_1, dx-\text{a.s.}; \phi = 0, dx \mid_{\lambda_1=0} - \text{a.s.}\},$$
and $\mathcal{S}^-(-\lambda_2, dy)$ is defined similarly. 
\end{remark}

\subsubsection{Alternative dual optimization}
By \cite[Theorem 3.1]{chizat2018scaling} (or Theorem \ref{thm:dual}), suppose $(\phi, \psi)$ is a maximizer for the dual problem \eqref{eq:egopt_dual}, then the corresponding minimizer can be obtained by $\gamma = e^{\frac{\phi}{\epsilon}} K e^{\frac{\psi}{\epsilon}} dxdy$. 

It remains to find optimal dual variations $\phi,\psi$ in the dual problem \eqref{eq:egopt_dual}. Given $\psi$, we will find the optimal $\phi$. 
First, we set $u = e^{\phi/\epsilon}, v = e^{\psi/\epsilon}, Kv = \langle K(x, \cdot)v(\cdot), dy\rangle, K^\top u = \langle K(\cdot, y) u(\cdot), dx \rangle$.

We choose the $TV(\lambda_1 \gamma_1 \parallel \lambda_1 \mu)$ term, then optimizing $\phi$ is equivalent to  
\begin{align}
\max_{\phi \ge -\lambda_1, dx-\text{a.s.}} -\epsilon \langle e^{\frac{\phi}{\epsilon}} K, dxdy \rangle + \langle \min(\phi, \lambda_1), \mu \rangle. \nonumber 
\end{align}

By the first-order derivative method, combined with the fact that $\phi \ge -\lambda_1, dx-\text{a.s.}$ and $\phi = 0, dx \mid_{\lambda_1=0} - \text{a.s.}$, we obtain the optimal $\phi$ (or $u$) given by: 
\begin{align}
u(x) &= \exp(\phi/\epsilon) = \frac{\overbrace{\text{clip}\left(\frac{d\mu}{dx}, [e^{-\lambda_1/\epsilon}, e^{\lambda_1/\epsilon}] Kv \right)}^{\text{prox}_{TV(\lambda_1\cdot|\lambda_1\mu)/\epsilon}^{KL}(Kv)}(x)}{Kv(x)} = \text{clip}\left(\frac{d\mu/dx}{Kv(x)}, [e^{-\lambda_1/\epsilon}, e^{\lambda_1/\epsilon}] \right), \label{eq:tv_update_u}
\end{align}
where $\text{clip}(a, [a_1, a_2]) = \begin{cases}
a & \text{if } a \in [a_1, a_2] \\ 
a_1 & \text{if } a < a_1 \\ 
a_2 & \text{if } a > a_2
\end{cases}$, and $\text{prox}_{TV}^{KL}$ is called the \textbf{proximal operator} 
for the KL divergence and the functional $\frac{1}{\epsilon} TV(\lambda_1 \cdot \parallel \lambda_1 \mu )$: 
\begin{align}
\text{prox}^{KL}_{F_1/\epsilon}(z):= \arg\min_{s} F_1(s) + \epsilon KL(s|z). \nonumber  
\end{align}
In addition, the operator  
$$Kv \mapsto \text{proxdiv}^{KL}_{F_1/\epsilon}(Kv):= \frac{\text{prox}_{F_1/\epsilon}^{KL}(Kv)}{Kv}$$
is called the \textbf{proximal-divide operator}. 

Similarly, when we choose the $\overline{PTV}(\gamma_1 \parallel \mu)$ term, $\phi$ (or $u = e^{\phi/\epsilon}$) is updated by the corresponding proximal-divide operator:
\begin{align}
u = \text{proxdiv}_{ \overline{PTV}(\lambda_1 \cdot\parallel \lambda_1 \mu)/\epsilon}^{KL}(Kv) = \min\left(\frac{d\mu/dx}{Kv}, e^{\lambda_1/\epsilon}\right). \label{eq:ptv_update_u}
\end{align}

Similarly, for fixed $\phi$ (or $u$), the optimal $\psi$ (or $v$) is given by 
\begin{align}
v &= \exp(\psi/\epsilon) = \text{proxdiv}^{KL}_{TV(\lambda_2\cdot|\lambda_2 \nu)/\epsilon}(K^\top u) = \text{clip}\left(\frac{\nu/dy}{K^\top u}, [e^{-\lambda_2/\epsilon}, e^{\lambda_2/\epsilon}] \right), &\text{for } TV(\lambda_2 \gamma_2\parallel \lambda_2 \nu) \label{eq:tv_update_v} \\
v &= \exp(\psi/\epsilon) = \text{proxdiv}^{KL}_{\overline{PTV}(\lambda_2\cdot|\lambda_2 \nu)/\epsilon}(K^\top u) = \min\left(\frac{\nu/dy}{K^\top u}, e^{\lambda_2/\epsilon} \right), &\text{for } \overline{PTV}(\lambda_2 \gamma_2\parallel \lambda_2 \nu) \label{eq:ptv_update_v}.
\end{align}
  
\subsection{Relation between GOPT and classical OT problem}

It has been studied that the classical OPT problem \eqref{eq:opt_1}, \eqref{eq:opt_2} is equivalent to a classical balanced OT problem. By this relation, one can apply the linear programming solver in OT for the OPT problem. In this section, we extend this relation to GOPT \eqref{eq:gopt}. 

In particular, we choose $\overline{PTV}(\lambda_1 \gamma_1\parallel \lambda_1 \mu)$ and $\overline{PTV}(\lambda_2 \gamma_2\parallel \lambda_2 \nu)$ in GOPT, and we obtain:
\begin{align}
GOPT(\mu, \nu) = \inf_{\gamma \in \Gamma_\leq (\mu, \nu)} \langle c - (\lambda_1 \oplus \lambda_2), \gamma \rangle + \underbrace{\langle \lambda_1, \mu \rangle + \langle \lambda_2, \nu \rangle}_{\text{constant}} \label{eq:gopt_1}. 
\end{align}

Introduce an auxiliary point $\hat\infty$ and let $\hat\Omega = \Omega \cup \{\hat\infty\}$. 
We define $\hat{c} : \hat\Omega^2 \to \mathbb{R}$ with
\begin{align}
    \hat{c}(x, y) :=
    \begin{cases}
    c(x, y) - (\lambda_1(x) + \lambda_2(y)) & \text{if } (x, y) \in \Omega^2\\ 
    0 & \text{elsewhere}
    \end{cases}
\end{align}
In addition, define 
\begin{align}
\hat{\mu} &= \mu + |\nu| \delta_{\hat\infty} \nonumber\\
\hat{\nu} &= \nu + |\mu| \delta_{\hat\infty} \nonumber
\end{align}
Thus, $\|\hat{\mu}\| = \|\hat{\nu}\|$. By \cite{caffarelli2010free, bai2022sliced}, the following mapping: 
\begin{align}
\Gamma_\leq (\mu, \nu) \ni \gamma \mapsto \hat{\gamma} := \gamma + (\mu - \gamma_1) \otimes \delta_{\hat\infty} + \delta_{\hat\infty} \otimes (\nu - \gamma_2) + |\gamma| \delta_{(\hat\infty, \hat\infty)} \in \Gamma(\hat{\mu}, \hat{\nu}) \label{eq:gamma_gamma_hat} 
\end{align}
is a well-defined bijection. 

We claim the following:
\begin{lemma}\label{lem:opt_ot}
Consider the balanced OT problem:
\begin{align}
\inf_{\hat{\gamma} \in \Gamma(\hat{\mu}, \hat{\nu})} \int_{\hat{\Omega}^2} \hat{c}  d\hat{\gamma} \label{eq:ot_hat}  
\end{align}
Choose $\gamma \in \Gamma_\leq (\mu, \nu)$ and find $\hat{\gamma}$ by \eqref{eq:gamma_gamma_hat}. 
We have that $\hat{\gamma}$ is a minimizer of \eqref{eq:ot_hat} if and only if $\gamma$ is a minimizer for \eqref{eq:gopt_1}.  
\end{lemma}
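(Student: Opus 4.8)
The plan is to exploit the bijection \eqref{eq:gamma_gamma_hat} directly: since it identifies the feasible set $\Gamma_\leq(\mu,\nu)$ of \eqref{eq:gopt_1} with the feasible set $\Gamma(\hat\mu,\hat\nu)$ of the balanced problem \eqref{eq:ot_hat}, it suffices to show that the two objective functionals agree along this bijection up to an additive constant that does not depend on the plan. Once that is established, the set of minimizers of one problem maps under \eqref{eq:gamma_gamma_hat} precisely onto the set of minimizers of the other, which is exactly the ``if and only if'' being claimed. So the whole lemma reduces to one bookkeeping computation plus an appeal to bijectivity.

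The computation I would carry out is to evaluate $\int_{\hat\Omega^2}\hat c\,d\hat\gamma$ by splitting $\hat\gamma$ into its four defining pieces, namely $\gamma$, $(\mu-\gamma_1)\otimes\delta_{\hat\infty}$, $\delta_{\hat\infty}\otimes(\nu-\gamma_2)$, and $|\gamma|\delta_{(\hat\infty,\hat\infty)}$, and integrating $\hat c$ against each. The key structural fact is that $\hat c$ vanishes off $\Omega^2$: the second piece is supported on $\Omega\times\{\hat\infty\}$, the third on $\{\hat\infty\}\times\Omega$, and the fourth on $\{(\hat\infty,\hat\infty)\}$, so all three contribute zero. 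Only the first piece survives, and there $\hat c=c-(\lambda_1\oplus\lambda_2)$, giving
\begin{align}
\int_{\hat\Omega^2}\hat c\,d\hat\gamma=\langle c-(\lambda_1\oplus\lambda_2),\gamma\rangle. \nonumber
\end{align}
Comparing with \eqref{eq:gopt_1}, whose objective is $\langle c-(\lambda_1\oplus\lambda_2),\gamma\rangle+\langle\lambda_1,\mu\rangle+\langle\lambda_2,\nu\rangle$, the two functionals differ exactly by the constant $\langle\lambda_1,\mu\rangle+\langle\lambda_2,\nu\rangle$, which is finite because $\lambda_1,\lambda_2$ are bounded and $\mu,\nu$ have finite mass.

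With this identity in hand, I would conclude as follows: for $\gamma\in\Gamma_\leq(\mu,\nu)$ and its image $\hat\gamma\in\Gamma(\hat\mu,\hat\nu)$, the value of \eqref{eq:ot_hat} at $\hat\gamma$ equals the value of \eqref{eq:gopt_1} at $\gamma$ minus a fixed constant, and since \eqref{eq:gamma_gamma_hat} is a bijection onto $\Gamma(\hat\mu,\hat\nu)$, minimizing over one feasible set is equivalent to minimizing over the other. Hence $\hat\gamma$ minimizes \eqref{eq:ot_hat} exactly when $\gamma$ minimizes \eqref{eq:gopt_1}. The main obstacle here is not conceptual but a matter of care: one must verify the vanishing of the three boundary contributions rigorously (i.e., that $\hat c$ is genuinely zero on $\Omega\times\{\hat\infty\}$, $\{\hat\infty\}\times\Omega$, and $\{(\hat\infty,\hat\infty)\}$, which is how $\hat c$ was defined) and keep track of the degenerate case where $\langle c,\gamma\rangle=+\infty$, in which both objectives are simultaneously infinite and the correspondence is vacuously preserved. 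The finiteness of the additive constant and the already-granted reduction of the $\overline{PTV}$-version of GOPT to \eqref{eq:gopt_1} close the argument.
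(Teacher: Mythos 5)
Your proposal is correct and follows essentially the same route as the paper: establish the identity $\int_{\hat\Omega^2}\hat c\,d\hat\gamma=\langle c-(\lambda_1\oplus\lambda_2),\gamma\rangle$ (the boundary pieces vanish because $\hat c=0$ off $\Omega^2$) and then invoke the bijectivity of \eqref{eq:gamma_gamma_hat} together with the fact that the two objectives differ by the plan-independent constant $\langle\lambda_1,\mu\rangle+\langle\lambda_2,\nu\rangle$. Your write-up simply spells out the bookkeeping that the paper leaves implicit.
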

\begin{proof}
We have:
\begin{align}
\int_{\hat{\Omega}^2} \hat{c}  d\hat{\gamma} = \int_{\Omega^2} (c - (\lambda_1 \oplus \lambda_2))  d\gamma \nonumber 
\end{align}
Since the mapping \eqref{eq:gamma_gamma_hat} is a bijection, we have that $\hat{\gamma}$ is a minimizer for the OT problem \eqref{eq:ot_hat} if and only if $\gamma$ is a minimizer for the OPT problem \eqref{eq:gopt_1}.
\end{proof}

In addition, it is straightforward to verify the following:
\begin{lemma}\label{lem:large_lambda}
In problem \eqref{eq:gopt_1}, choose $\epsilon' > 0$, then for each optimal $\gamma \in \Gamma_\leq (\mu, \nu)$, we have $$\gamma(\{c - (\lambda_1 \oplus \lambda_2) \ge \epsilon'\}) = 0.$$
\end{lemma}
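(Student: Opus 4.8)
The plan is to prove the claim by a direct truncation argument: if an optimal $\gamma$ placed mass on the region where the effective cost is bounded away from zero from above, then deleting that mass would strictly decrease the objective while preserving feasibility, contradicting optimality.

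First I would set $h := c - (\lambda_1 \oplus \lambda_2)$ and $A_{\epsilon'} := \{(x,y) \in \Omega^2 : h(x,y) \ge \epsilon'\}$. Since $c$ is lower semi-continuous (hence Borel) and $\lambda_1, \lambda_2$ are measurable, $h$ is Borel measurable and $A_{\epsilon'}$ is a Borel set, so the restriction $\gamma|_{A_{\epsilon'}}$ is well-defined. I would also record that the optimal value of \eqref{eq:gopt_1} is finite: the feasible choice $\gamma = 0$ yields the finite value $\langle \lambda_1, \mu \rangle + \langle \lambda_2, \nu \rangle$ (using that $\lambda_1, \lambda_2$ are bounded and $\mu, \nu$ are finite), while the objective is bounded below by $-(\|\lambda_1\|_\infty + \|\lambda_2\|_\infty)\min\{|\mu|, |\nu|\}$ since $c \ge 0$ and $|\gamma| \le \min\{|\mu|, |\nu|\}$ on $\Gamma_\leq(\mu, \nu)$. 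Consequently, for an optimal $\gamma$ the quantity $\langle c, \gamma \rangle$ is finite, so $h$ is $\gamma$-integrable and the manipulations below involve only finite numbers.

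Next, given an optimal $\gamma$, I would form $\gamma' := \gamma|_{A_{\epsilon'}^c}$, i.e. $\gamma$ with its mass on $A_{\epsilon'}$ removed. Because $\gamma' \le \gamma$ as measures, each marginal only decreases: $\gamma'_1 \le \gamma_1 \le \mu$ and $\gamma'_2 \le \gamma_2 \le \nu$, so $\gamma' \in \Gamma_\leq(\mu, \nu)$ is feasible. Since the constant $\langle \lambda_1, \mu \rangle + \langle \lambda_2, \nu \rangle$ is unaffected, the change in objective is exactly $\langle h, \gamma \rangle - \langle h, \gamma' \rangle = \langle h, \gamma|_{A_{\epsilon'}} \rangle \ge \epsilon'\, \gamma(A_{\epsilon'})$, using $h \ge \epsilon'$ on $A_{\epsilon'}$ and $\gamma|_{A_{\epsilon'}} \ge 0$. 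If $\gamma(A_{\epsilon'}) > 0$ this is strictly positive, so $\gamma'$ has strictly smaller objective than $\gamma$, contradicting optimality. Hence $\gamma(A_{\epsilon'}) = 0$.

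I expect no serious obstacle; the only points requiring care are the two routine verifications that (i) the truncated plan $\gamma'$ remains in $\Gamma_\leq(\mu, \nu)$, which follows from monotonicity of marginals under $\gamma' \le \gamma$, and (ii) the optimal objective is finite, so that subtracting $\langle h, \gamma' \rangle$ from $\langle h, \gamma \rangle$ is legitimate rather than an $\infty - \infty$ form. Since the feasible set $\Gamma_\leq(\mu, \nu)$ imposes no fixed-mass constraint, deleting mass is always admissible; this is precisely what distinguishes the argument from the mass-constrained MOPT setting and is what makes the truncation available.
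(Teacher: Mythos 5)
Your proposal is correct and follows essentially the same route as the paper's proof: restrict $\gamma$ to the complement of $\mathcal{A}=\{c-(\lambda_1\oplus\lambda_2)\ge\epsilon'\}$, note that the restricted plan stays in $\Gamma_\leq(\mu,\nu)$, and observe that the objective drops by at least $\epsilon'\,\gamma(\mathcal{A})>0$, contradicting optimality. The extra checks you include (measurability of the set and finiteness of the optimal value, so the subtraction is not $\infty-\infty$) are sensible refinements the paper leaves implicit.
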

\begin{proof}
Pick an optimal $\gamma \in \Gamma_\leq (\mu, \nu)$ and let $\mathcal{A} = \{c - (\lambda_1 \oplus \lambda_2) \ge \epsilon'\}$. Suppose $\gamma(\mathcal{A}) > 0$. 

We have $\gamma \mid_{\mathcal{A}} \leq \gamma$, thus $\gamma \mid_{\mathcal{A}^c} \in \Gamma_\leq (\mu, \nu)$. In addition,
\begin{align}
\langle c - (\lambda_1 \oplus \lambda_2), \gamma \rangle - \langle c - (\lambda_1 \oplus \lambda_2), \gamma \mid_{\mathcal{A}^c} \rangle
= \langle c - (\lambda_1 \oplus \lambda_2), \gamma \mid_{\mathcal{A}} \rangle \ge \epsilon' \|\gamma \mid_{\mathcal{A}}\| > 0 \nonumber  
\end{align}
This is a contradiction since $\gamma$ is optimal. 
\end{proof}

\begin{lemma}\label{lem:large_lambda_2}
In problem \eqref{eq:gopt_1}, suppose $|\nu| \leq |\mu|$. If for all $(x, y) \in \Omega^2$, $c - (\lambda_1 \oplus \lambda_2) < -\epsilon_1$ for some $\epsilon_1 > 0$, then for each optimal transportation plan $\gamma$, we have $\gamma_2 = \nu$.
\end{lemma}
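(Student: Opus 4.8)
The plan is to argue by contradiction: I assume $\gamma$ is optimal for \eqref{eq:gopt_1} but $\gamma_2 \neq \nu$, and then construct a strictly cheaper competitor, contradicting optimality. Since the additive constant $\langle \lambda_1, \mu\rangle + \langle \lambda_2, \nu\rangle$ does not affect the minimizer, it suffices to work with the objective $\langle \hat{c}, \gamma\rangle$, where $\hat{c} = c - (\lambda_1 \oplus \lambda_2)$ and, by hypothesis, $\hat{c} < -\epsilon_1$ everywhere on $\Omega^2$.

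First I would carry out the mass accounting. Writing $\mu' := \mu - \gamma_1$ and $\nu' := \nu - \gamma_2$, feasibility $\gamma \in \Gamma_\leq(\mu,\nu)$ gives $\mu', \nu' \in \mathcal{M}_+(\Omega)$. Since $\gamma_2 \leq \nu$ and $\gamma_2 \neq \nu$, the measure $\nu'$ is a nonzero positive measure, so $\|\nu'\| > 0$ and $\|\gamma\| = \|\gamma_2\| = \|\nu\| - \|\nu'\| < \|\nu\| = |\nu|$. Combining this with the hypothesis $|\nu| \leq |\mu|$ yields $\|\gamma_1\| = \|\gamma\| < |\mu|$, hence $\|\mu'\| = |\mu| - \|\gamma_1\| > 0$, so $\mu'$ is also a nonzero positive measure. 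This is the step where the assumption $|\nu| \leq |\mu|$ is essential: it guarantees that leftover source mass is still available once the target constraint fails to be saturated.

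Next I would build the improving competitor by adding a small amount of transport between the two leftover masses. Set $\sigma := \tfrac{t}{\|\mu'\|\,\|\nu'\|}\, \mu' \otimes \nu'$ with $t := \min(\|\mu'\|, \|\nu'\|) > 0$. Marginalizing the product measure gives $\sigma_1 = \tfrac{t}{\|\mu'\|}\mu' \leq \mu'$ and $\sigma_2 = \tfrac{t}{\|\nu'\|}\nu' \leq \nu'$, so the candidate $\gamma' := \gamma + \sigma$ satisfies $\gamma'_1 = \gamma_1 + \sigma_1 \leq \gamma_1 + \mu' = \mu$ and likewise $\gamma'_2 \leq \nu$; thus $\gamma' \in \Gamma_\leq(\mu,\nu)$. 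Because $\hat{c} < -\epsilon_1$ and $\|\sigma\| = t > 0$, we obtain $\langle \hat{c}, \sigma\rangle \leq -\epsilon_1 t < 0$, whence $\langle \hat{c}, \gamma'\rangle = \langle \hat{c}, \gamma\rangle + \langle \hat{c}, \sigma\rangle < \langle \hat{c}, \gamma\rangle$, contradicting the optimality of $\gamma$. Therefore $\gamma_2 = \nu$.

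I do not expect a genuine obstacle; the only point requiring care is keeping the added mass $\sigma$ feasible, which is precisely why I normalize $\mu' \otimes \nu'$ by $\|\mu'\|\,\|\nu'\|$ and scale by $t = \min(\|\mu'\|, \|\nu'\|)$ so that each marginal of $\sigma$ is dominated by the corresponding leftover. I would also remark in passing that the strict inequality is legitimate because the objective is finite: $c \geq 0$ together with $\lambda_1, \lambda_2$ bounded makes $\hat{c}$ bounded below, while the marginal constraints bound $\|\gamma\|$ by $|\nu| < \infty$, so $\langle \hat{c}, \sigma\rangle$ is a finite negative number.
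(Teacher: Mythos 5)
Your proof is correct, and it reaches the contradiction by the same overall mechanism as the paper: since $\hat c = c - (\lambda_1\oplus\lambda_2) < -\epsilon_1$ everywhere, any feasible plan carrying strictly more mass than $\gamma$ is strictly cheaper, so an optimal plan must saturate the smaller marginal. The difference lies in how the heavier competitor is produced. The paper invokes an external result (Lemma E.1 of \cite{bai2024efficient}) asserting that any $\gamma\in\Gamma_\leq(\mu,\nu)$ with $|\gamma|<|\nu|$ can be enlarged to some $\gamma'\ge\gamma$ with $\gamma'_1\le\mu$ and $\gamma'_2=\nu$, and then applies the cost estimate to $\gamma'-\gamma$. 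You instead build the increment explicitly as the rescaled product $\sigma=\tfrac{t}{\|\mu'\|\,\|\nu'\|}\,\mu'\otimes\nu'$ of the leftover marginals $\mu'=\mu-\gamma_1$ and $\nu'=\nu-\gamma_2$, with $t=\min(\|\mu'\|,\|\nu'\|)$; the hypothesis $|\nu|\le|\mu|$ enters exactly where it should, to guarantee $\|\mu'\|>0$ whenever $\|\nu'\|>0$. This makes the argument self-contained and elementary: you do not need the full strength of saturating $\gamma'_2=\nu$, only that some strictly positive extra mass can be routed feasibly, at the cost of a slightly longer mass-accounting step. Both versions are sound, and your closing remark on the finiteness of $\langle\hat c,\sigma\rangle$ (boundedness of $\hat c$ from the hypothesis together with $c\ge 0$ and the boundedness of $\lambda_1,\lambda_2$) is a point the paper leaves implicit.
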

\begin{proof}
Choose an optimal $\gamma$ and suppose $|\gamma| < |\nu|$. Then by [Lemma E.1 \cite{bai2024efficient}], there exists a $\gamma'\in\Gamma(\mu,\nu)$ such that $\gamma'_2 = \nu$, $\gamma \leq \gamma'$, and $\gamma'_1 \leq \mu$. Then we have:
\begin{align}
\langle c - (\lambda_1 \oplus \lambda_2), \gamma' \rangle - \langle c - (\lambda_1 \oplus \lambda_2), \gamma \rangle 
= \langle c - (\lambda_1 \oplus \lambda_2), \gamma' - \gamma \rangle 
\leq -\epsilon_1 \|\gamma' - \gamma\| < 0 \nonumber
\end{align}
This is a contradiction to the fact that $\gamma$ is an optimal transportation plan. 
\end{proof}

\subsection{Algorithms for GOPT problems}
\subsubsection{OPT problems in a discrete setting}

Suppose $\mu = \sum_{i=1}^n p_i \delta_{x_i}$ and $\nu = \sum_{j=1}^m q_j \delta_{y_j}$, where $p > 0_n$ and $q > 0_m$ for all $i,j$. We set $dx = \frac{1}{n} \sum_{i=1}^n \delta_{x_i}$ and $dy = \frac{1}{m} \sum_{j=1}^m \delta_{y_j}$. Without loss of generality, we can redefine $dx$ and $dy$ as $dx = \sum_{i=1}^n \delta_{x_i}$ and $dy = \sum_{j=1}^m \delta_{y_j}$ since the (positive) scaling of reference measures will not change the OPT problems. In addition, we set $\lambda_1 \in \mathbb{R}_{+}^n$ and $\lambda_2 \in \mathbb{R}_{+}^m$.

Finally, choose $\epsilon > 0$. We adopt $c \in \mathbb{R}^{n \times m}$ as the cost function (matrix) in OPT problems and define $K = e^{-c/\epsilon}$. The negative entropy is defined by  
$-H(\gamma) := \sum_{i,j} (\ln(\gamma_{ij}) - 1) \gamma_{ij}$.

Then the generalized OPT \eqref{eq:gopt}, the entropic version \eqref{eq:egopt}, and the corresponding dual forms \eqref{eq:egopt_dual}, can be written as follows: 
\begin{align}
GOPT(\mu, \nu; \lambda_1, \lambda_2) = \min_{\gamma \in \mathbb{R}_+^{n \times m}} \sum_{i,j} c_{ij} \gamma_{ij}
&+ \begin{cases}
    \sum_i (\lambda_1)_i |(p_i - (\gamma 1_m)_i)| \\
    \sum_i (\lambda_1)_i (p_i - (\gamma 1_m)_i) + \iota_{\{\gamma 1_m \leq p\}}
\end{cases} \nonumber \\
&+ \begin{cases}
    \sum_j (\lambda_2)_j |(q_j - (\gamma^\top 1_n)_j)| \\
    \sum_j (\lambda_2)_j (q_j - (\gamma^\top 1_n)_j) + \iota_{\{\gamma^\top 1_n \leq q\}}
\end{cases}
\label{eq:gopt_discrete}
\end{align}

\begin{align}
EGOPT(\mu, \nu; \lambda_1, \lambda_2, \epsilon) = \min_{\gamma \in \mathbb{R}_+^{n \times m}} \sum_{i,j} c_{ij} \gamma_{ij} + \epsilon \sum_{i,j} (\ln(\gamma_{ij}) - 1) \gamma_{ij}
&+ \begin{cases}
    \sum_i (\lambda_1)_i |(p_i - (\gamma 1_m)_i)| \\
    \sum_i (\lambda_1)_i (p_i - (\gamma 1_m)_i) + \iota_{\{\gamma 1_m \leq p\}}
\end{cases} \nonumber \\
&\hspace{-3em}+ \begin{cases}
    \sum_j (\lambda_2)_j |(q_j - (\gamma^\top 1_n)_j)| \\
    \sum_j (\lambda_2)_j (q_j - (\gamma^\top 1_n)_j) + \iota_{\{\gamma^\top 1_n \leq q\}}
\end{cases}
\label{eq:egopt_discrete}
\end{align}

\begin{align}
Dual-EGOPT(\mu, \nu; \lambda_1, \lambda_2, \epsilon) = \max_{\substack{\phi \in \mathbb{R}^n \\ \psi \in \mathbb{R}^m}} -\epsilon \sum_{i,j} (e^{\frac{\phi_i + \psi_j}{\epsilon}} - 1) K_{ij}
&+ \begin{cases}
    \sum_i \min((\lambda_1)_i, \phi_i) p_i \\
    \sum_i \min((\lambda_1)_i, \phi_i) p_i + \iota_{\{\phi \ge -\lambda_1\}}
\end{cases} \nonumber \\
&+ \begin{cases}
    \sum_j \min((\lambda_2)_j, \psi_j) q_j \\
    \sum_j \min((\lambda_2)_j, \psi_j) q_j + \iota_{\{\psi \ge -\lambda_2\}}
\end{cases}
\label{eq:egopt_dual_discrete}
\end{align}

\subsubsection{Sinkhorn algorithm}\label{sec:sinkhorn}

The Sinkhorn algorithm aims to solve \eqref{eq:egopt_dual_discrete}. In particular, set $u = e^{\phi/\epsilon} \in \mathbb{R}^n$ and $v = e^{\psi/\epsilon} \in \mathbb{R}^m$. From the fact $dx \ll \mu$ in the discrete setting, the alternative optimization steps \eqref{eq:tv_update_u} (or \eqref{eq:ptv_update_u}), \eqref{eq:tv_update_v} (or \eqref{eq:ptv_update_v}) become:
\begin{align}
&u \gets \text{proxdiv}^{KL}_{TV(\lambda_1\cdot|\lambda_1p)/\epsilon}(Kv) = \text{clip}\left(\frac{p}{Kv}, [e^{-\lambda_1/\epsilon}, e^{\lambda_1/\epsilon}]\right) &\text{for } TV(\lambda_1 \gamma_1\parallel \lambda_1 \mu) \label{eq:tv_update_u_discrete}\\
&u \gets \text{proxdiv}^{KL}_{\overline{PTV}(\lambda_1\cdot|\lambda_1p)/\epsilon}(Kv) = \min\left(\frac{p}{Kv}, e^{\lambda_1/\epsilon}\right) &\text{for } \overline{PTV}(\lambda_1 \gamma_1\parallel \lambda_1 \mu) \label{eq:ptv_update_u_discrete}\\
&v \gets \text{proxdiv}^{KL}_{TV(\lambda_2\cdot|\lambda_2q)/\epsilon}(K^\top u) = \text{clip}\left(\frac{q}{K^\top u}, [e^{-\lambda_2/\epsilon}, e^{\lambda_2/\epsilon}]\right) &\text{for } TV(\lambda_2 \gamma_2\parallel \lambda_2 \nu) \label{eq:tv_update_v_discrete}\\
&v \gets \text{proxdiv}^{KL}_{\overline{PTV}(\lambda_2\cdot|\lambda_2q)/\epsilon}(K^\top u) = \min\left(\frac{q}{K^\top u}, e^{\lambda_2/\epsilon}\right) &\text{for } \overline{PTV}(\lambda_2 \gamma_2\parallel \lambda_2 \nu) \label{eq:ptv_update_v_discrete}
\end{align}
where ``proxdiv'' denotes the ``\textbf{proximal-divide operator}''.

In summary, the Sinkhorn algorithm is presented in Algorithm \ref{alg:sinkhorn}. When solving the (entropic) GOPT problem, if we choose $TV(\lambda_1 \gamma_1\parallel \lambda_2 \mu)$ as the first penalty term, we apply \eqref{eq:tv_update_u_discrete} as $\text{proxdiv}_{F_1}$; if we choose $\overline{PTV}(\lambda_1 \gamma_1\parallel \lambda_2 \mu)$ as the first penalty term, we apply \eqref{eq:ptv_update_u_discrete} as $\text{proxdiv}_{F_1}$. Similarly, we set $\text{proxdiv}_{F_2}$ accordingly.

\begin{algorithm}\caption{opt-Sinkhorn}
\label{alg:sinkhorn}
\KwInput{$c, \epsilon, \text{proxdiv}_{F_1}, \text{proxdiv}_{F_2}$}
\KwOutput{$\gamma$}
Initialize $v = 1_m$, $K = e^{-c/\epsilon}$\\
\For{$l=1,2,\ldots$}{
$u = \text{proxdiv}^{KL}_{F_1}(Kv)$ by \eqref{eq:tv_update_u_discrete} or \eqref{eq:ptv_update_u_discrete}\\
$v = \text{proxdiv}^{KL}_{F_2}(K^\top u)$ by \eqref{eq:tv_update_v_discrete} or \eqref{eq:ptv_update_v_discrete}\\
\text{If $(u,v)$ converge}, {break}
}
$\gamma \gets (u_i K_{ij} v_j)_{ij}$
\end{algorithm}

\subsubsection{Linear programming}\label{sec:lp}
When we select $PTV$ as the first and second penalty terms, based on the relation between GOPT and OT as discussed in Lemma \eqref{lem:opt_ot}, we proceed as follows:
Let $\hat{c} \in \mathbb{R}^{(n+1) \times (m+1)}$ with 
\begin{align}
\hat{c}_{i,j} = \begin{cases}
    c_{i,j} - (\lambda_1)_i - (\lambda_2)_j & \text{if } i \in [1:n], j \in [1:m] \\
    0 & \text{elsewhere}
\end{cases} \label{alg:c_hat}
\end{align}
Similarly, set $\hat{p} \in \mathbb{R}^{n+1}, \hat{q} \in \mathbb{R}^{m+1}$ with
\begin{align}
\hat{p}_i &= \begin{cases}
    p_i & \text{if } i \in [1:n] \\
    \sum_j q_j & \text{if } i = n+1
\end{cases} \label{alg:p_hat} \\
\hat{q}_j &= \begin{cases}
    q_j & \text{if } j \in [1:m] \\
    \sum_i p_i & \text{if } j = m+1
\end{cases} \label{alg:q_hat}
\end{align}
The linear programming solver is given in Algorithm \ref{alg:lp}.

\begin{algorithm}\caption{opt-lp}
\label{alg:lp}
\KwInput{$c, p, q, \lambda_1, \lambda_2$}
\KwOutput{$\gamma$}
Compute $\hat{c}, \hat{p}, \hat{q}$ using \eqref{alg:c_hat}, \eqref{alg:p_hat}, \eqref{alg:q_hat}\\
Solve the OT problem 
$\hat{\gamma}^* = \arg\min_{\hat{\gamma} \in \Gamma(\hat{p}, \hat{q})} \langle \hat{c}, \hat{\gamma} \rangle$\\
Set $\gamma \gets \hat{\gamma}^*[1:n, 1:m]$
\end{algorithm}

\subsubsection{Extreme case}
Suppose $|\mu| \geq |\nu|$. We consider the following special optimal partial transport problem:
\begin{align}
SOPT(\mu, \nu) &:= \inf_{\substack{\gamma \in \Gamma_\leq(\mu, \nu)\\ \gamma_2 = \nu}} \langle c, \gamma \rangle \label{eq:sopt} \\
&= \inf_{\gamma \in \mathcal{M}_+(\Omega^2)} \langle c, \gamma \rangle + \iota_\leq(\gamma_1 | \mu) + \iota_=(\gamma_2 | \nu) \label{eq:sopt_1}
\end{align}
\begin{remark}
In the discrete case, we can apply the linear programming method \eqref{alg:lp} to solve it. In particular, we set $\lambda_1 \equiv 0$, $\lambda_2 \equiv \max(c) + 1$ in the PGOPT problem \eqref{eq:gopt_1}. By Lemma \ref{lem:large_lambda_2}, the optimal solution satisfies $\gamma_2 = \nu$ and $\gamma_1 \leq \mu$.
\end{remark}

The entropic version of the above Special-OPT (up to a constant) is:
\begin{align}
ESOPT(\mu, \nu) &:= \inf_{\substack{\gamma \in \mathcal{M}_+(\Omega^2)\\ \gamma \ll dxdy}} \epsilon KL(\gamma \parallel K dxdy) + \iota_\leq(\gamma_1 | \mu) + \iota_=(\gamma_2 | \nu) \label{eq:esopt}
\end{align}

By the convex-conjugate of $\iota_=$ and $\iota_\leq$ (see \eqref{eq:D_=*}, \eqref{eq:D_<=*}), we can derive the equivalent dual form:
\begin{align}
Dual-SOPT(\mu, \nu) = \sup_{\substack{\phi \in L_\infty(\Omega, dx)\\ \psi \in L_\infty(\Omega, dy)}} -\epsilon \langle e^{\phi \oplus \psi} K, dxdy \rangle + \langle \min(\phi, 0), \mu \rangle + \langle \psi, \nu \rangle \label{eq:esopt-dual}
\end{align}
\begin{remark}
The problem \eqref{eq:sopt} and its entropic version \eqref{eq:esopt} can be obtained by setting $\lambda_1, \lambda_2$ to extreme values in GOPT problems \eqref{eq:gopt},\eqref{eq:egopt}.

Indeed, for each $D\subset \Omega$, and let $D^c = \Omega \setminus \{D\}$, it is clear that as $\lambda_1\to \infty$ on $D$, and $\lambda_1$ is fixed on $D^c$, we have:
\begin{align}
TV(\lambda_1 \cdot \parallel \lambda_1 \mu) &\to \iota_=(\cdot\mid_D\parallel \mu \mid_D)+ TV(\lambda_1 \cdot |_{D^c} \parallel \lambda_1 \mu |_{D^c}) \label{eq:PTV_infty} \\
\overline{PTV}(\lambda_1 \cdot \parallel \lambda_1 \mu) &\to \iota_=(\cdot |_{D}\parallel \mu |_{D}) + \overline{PTV}(\lambda_1 \cdot |_{D^c} \parallel \lambda_1 \mu |_{D^c}) \label{eq:TV_infty}
\end{align}

Similarly, by setting $\lambda_1 = 0$ on $D$, we obtain:
\begin{align}
TV(\lambda_1 \cdot \parallel \lambda_1 \mu) &= TV(\lambda_1 \cdot |_{D^c} \parallel \lambda_1 \mu |_{D^c}) \\
\overline{PTV}(\lambda_1 \cdot \parallel \lambda_1 \mu) &= \iota_\leq(\cdot, \mu) + \overline{PTV}(\lambda_1 \cdot |_{D^c} \parallel \lambda_1 \mu |_{D^c})
\end{align}

We choose $\overline{PTV}(\lambda_1 \gamma_1\parallel\lambda_1 \mu)$ as the first penalty term and $\overline{PTV}(\lambda_2 \gamma_2\parallel \lambda_2 \nu)$ as the second penalty term. Setting $D = \Omega$, and let $\lambda_1\equiv 0, \lambda_2 \to \infty$. Then we have \eqref{eq:egopt_dual} becomes \eqref{eq:esopt-dual}.
\end{remark}

It is straightforward to verify that the Sinkhorn iteration of \eqref{eq:esopt-dual} in step is
\begin{align}
\begin{cases}
u = e^\phi := \text{proxdiv}_{F_1}(K v) = \min\left(\frac{d\mu/dx}{Kv}, 1_n\right) = \min\left(\frac{p}{Kv}, 1_n\right) \\
v = e^\psi := \text{proxdiv}_{F_2}(K^\top u) = \frac{d\nu/dy}{Kv} = \frac{q}{K^\top u}
\end{cases} \label{alg:esopt_update}
\end{align}

\section{Related Work: Mass-Constrained Partial Transport Problem}\label{sec:mopt}
The mass-constrained optimal partial transport (MOPT) problem is defined by \eqref{eq:mopt}. In this section, we introduce the discrete version and related computations.

First, suppose $\mu = \sum_{i=1}^n p_i \delta_{x_i}$ and $\nu = \sum_{j=1}^m q_j \delta_{y_j}$ with $\eta \in [0, \min(\|p\|, \|q\|)]$, where $\|p\| = \sum_{i=1}^n p_i$ and $\|q\|$ is defined similarly. Then the discrete version of \eqref{eq:mopt} is defined by 
\begin{align}
MOPT(\mu, \nu) := \min_{\gamma \in \Gamma_\leq^\eta(p, q)} \sum_{i,j} c_{ij} \gamma_{ij} \label{eq:mopt_discrete}
\end{align}
where $\Gamma_\leq^\eta(p, q) := \{\gamma \in \mathbb{R}_+^{n \times m}: \gamma 1_n \leq q, \gamma^\top 1_m \leq p, \sum_{ij} \gamma = \eta\}$.

By \cite{chapel2020partial}, it can be solved by the following: 
Let $\hat{c} \in \mathbb{R}^{(n+1) \times (m+1)}$, $\hat{p} \in \mathbb{R}_+^{n+1}$, $\hat{q} \in \mathbb{R}_+^{m+1}$ as 
\begin{align}
\hat{c}_{ij} &= \begin{cases}
    c_{i,j} & \text{if } i \in [1:n], j \in [1:m] \\
    \max(c) + 2\alpha + \beta & \text{if } i = n+1, j = m+1 \\
    \alpha & \text{elsewhere}
\end{cases} \label{eq:c_hat_2} \\
\hat{p}_i &= \begin{cases}
    p_i & \text{if } i \in [1:n] \\
    \|q\| - \eta & \text{if } i = n+1
\end{cases} \hspace{1em}
\hat{q}_j = \begin{cases}
    q_j & \text{if } j \in [1:m] \\
    \|p\| - \eta & \text{if } j = m+1
\end{cases} \label{eq:pq_hat_2}
\end{align}
where $\alpha \ge 0$, $\beta > 0$. 

Then the OT problem 
$$\min_{\hat{\gamma} \in \Gamma(\hat{p}, \hat{q})} \sum_{i,j} \hat{c}_{ij} \hat{\gamma}_{i,j}$$
is equivalent to the MOPT problem \eqref{eq:mopt}. $\hat{\gamma}$ is optimal for the above OT problem if and only if $\gamma = \hat{\gamma}[1:n, 1:m]$ is optimal for the MOPT problem \eqref{eq:mopt}.

Thus, applying $\hat{c}, \hat{p}, \hat{q}$ in Algorithm \eqref{alg:lp} returns the solution for the MOPT problem \eqref{eq:mopt_discrete}.

The entropic regularization version of problem \eqref{eq:mopt_discrete} (up to a constant) is defined as 
\begin{align}
EMOPT(\mu, \nu) := \min_{\gamma \in \Gamma_\leq^\eta(p, q)} KL(\gamma \parallel K) \label{eq:emopt_discrete}
\end{align}
where $K = e^{-c/\epsilon}$ and $\epsilon > 0$.

Note, 
$$\gamma \in \Gamma_\leq^\eta(p, q) = \bigcap_{i=1}^3 \mathcal{C}_i$$
where $\mathcal{C}_1 = \{\gamma \in \mathbb{R}_+^{n \times m}: \gamma 1_m \leq q\}$, $\mathcal{C}_2 = \{\gamma \in \mathbb{R}_+^{n \times m}: \gamma^\top 1_n \leq p\}$, and $\mathcal{C}_3 = \{\gamma \in \mathbb{R}_+^{n \times m}: \sum \gamma = \eta\}$. 
$\mathcal{C}_1$ and $\mathcal{C}_2$ are convex, but not affine sets. $\mathcal{C}_3$ is an affine set.

The entropic MOPT problem \eqref{eq:emopt_discrete} can be solved by the following so-called Dykstra's algorithm \cite{bauschke2000dykstras}: 

\begin{algorithm}\caption{mopt-Dykstra}
\label{alg:mopt}
\KwInput{$p, q, \eta, c$}
\KwOutput{$\gamma$}
\mycommfont{Initialization}\\
\For{i=1,2,3}{
    $\xi^{i} \gets 1_{n \times m}$ \\
}
$\gamma^{(0)} \gets K \frac{m}{\|K\|}$ \\
\mycommfont{Main loop}\\
$k=0$
\For{$l=0,1,2,\ldots$}{
    \For{i=1,2,3}{
        $k \gets k+1$ \\ 
        $\gamma^{(k)} \gets \text{Proj}_{\mathcal{C}_i}^{KL}(\gamma^{(k-1)} \odot \xi^{i})$ \\
        $\xi^{i} \gets \xi^{i} \odot \frac{\gamma^{(k-1)}}{\gamma^{(k)}}$
    }
    Break if $\gamma^{(k)}$ converges
}
\end{algorithm}

where for $i=1,2,3$, 
\begin{align}
\text{Proj}_{\mathcal{C}_i}^{KL}(\gamma) := \min_{\gamma^i \in \mathcal{C}_i} KL(\gamma^i \parallel \gamma) \label{eq:proj}
\end{align}
is called the ``\textbf{Bregman projection}'' (which can be regarded the equivalent primal form of the proximal-divide operator discussed in \eqref{eq:tv_update_u}) onto the convex set $\mathcal{C}_i$. By \cite[Proposition 5]{benamou2015iterative}, these projection operators are given by: 
\begin{align}
\text{Proj}_{\mathcal{C}^1}(\gamma) &= \text{diag} \left( \min \left( \frac{p}{\gamma 1_m}, 1_n \right) \right) \gamma \label{proj_1} \\
\text{Proj}_{\mathcal{C}^2}(\gamma) &= \gamma \text{diag} \left( \min \left( \frac{q}{\gamma^\top 1_n}, 1_m \right) \right) \label{proj_2} \\
\text{Proj}_{\mathcal{C}^3}(\gamma) &= \gamma \frac{\eta}{\|\gamma\|} \label{proj_3}
\end{align}

\section{Acknowledgement}
The author thanks Dr. Soheil Kolouri (soheil.kolouri@vanderbilt.edu) for helpful discussion.

\bibliographystyle{plain}
\bibliography{refs}

\begin{thebibliography}{10}

\bibitem{bai2024efficient}
Yikun Bai, Rocio~Diaz Martin, Hengrong Du, Ashkan Shahbazi, and Soheil Kolouri.
\newblock Efficient solvers for partial gromov-wasserstein.
\newblock {\em arXiv preprint arXiv:2402.03664}, 2024.

\bibitem{bai2022sliced}
Yikun Bai, Bernard Schmitzer, Mathew Thorpe, and Soheil Kolouri.
\newblock Sliced optimal partial transport.
\newblock {\em arXiv preprint arXiv:2212.08049}, 2023.

\bibitem{bauschke2000dykstras}
Heinz~H Bauschke and Adrian~S Lewis.
\newblock Dykstras algorithm with bregman projections: A convergence proof.
\newblock {\em Optimization}, 48(4):409--427, 2000.

\bibitem{benamou2015iterative}
Jean-David Benamou, Guillaume Carlier, Marco Cuturi, Luca Nenna, and Gabriel Peyr{\'e}.
\newblock Iterative bregman projections for regularized transportation problems.
\newblock {\em SIAM Journal on Scientific Computing}, 37(2):A1111--A1138, 2015.

\bibitem{caffarelli2010free}
Luis~A Caffarelli and Robert~J McCann.
\newblock Free boundaries in optimal transport and monge-ampere obstacle problems.
\newblock {\em Annals of mathematics}, pages 673--730, 2010.

\bibitem{chapel2020partial}
Laetitia Chapel, Mokhtar~Z Alaya, and Gilles Gasso.
\newblock Partial optimal tranport with applications on positive-unlabeled learning.
\newblock {\em Advances in Neural Information Processing Systems}, 33:2903--2913, 2020.

\bibitem{chizat2018scaling}
Lenaic Chizat, Gabriel Peyr{\'e}, Bernhard Schmitzer, and Fran{\c{c}}ois-Xavier Vialard.
\newblock Scaling algorithms for unbalanced optimal transport problems.
\newblock {\em Mathematics of Computation}, 87(314):2563--2609, 2018.

\bibitem{figalli2010optimal}
Alessio Figalli.
\newblock The optimal partial transport problem.
\newblock {\em Archive for rational mechanics and analysis}, 195(2):533--560, 2010.

\bibitem{figalli2010new}
Alessio Figalli and Nicola Gigli.
\newblock A new transportation distance between non-negative measures, with applications to gradients flows with dirichlet boundary conditions.
\newblock {\em Journal de math{\'e}matiques pures et appliqu{\'e}es}, 94(2):107--130, 2010.

\bibitem{Piccoli2014Generalized}
Benedetto Piccoli and Francesco Rossi.
\newblock Generalized wasserstein distance and its application to transport equations with source.
\newblock {\em Archive for Rational Mechanics and Analysis}, 211(1):335--358, 2014.

\bibitem{rockafellar1967duality}
Ralph Rockafellar.
\newblock Duality and stability in extremum problems involving convex functions.
\newblock {\em Pacific Journal of Mathematics}, 21(1):167--187, 1967.

\bibitem{Villani2009Optimal}
Cedric Villani.
\newblock {\em Optimal transport: old and new}.
\newblock Springer, 2009.

\bibitem{villani2021topics}
C{\'e}dric Villani.
\newblock {\em Topics in optimal transportation}, volume~58.
\newblock American Mathematical Soc., 2021.

\end{thebibliography}
\end{document}